\title[Gromov's ellipticity of cubic hypersurfaces]{Algebraic Gromov's ellipticity 
of cubic hypersurfaces}
\author{Shulim Kaliman and Mikhail Zaidenberg}
\address{University of Miami, Department of Mathematics, Coral Gables, FL
33124, USA}
\email{kaliman@math.miami.edu}
\address{Univ. Grenoble Alpes, CNRS, IF, 38000 Grenoble, France}
\email{mikhail.zaidenberg@univ-grenoble-alpes.fr}
\thanks{2020 \emph{Mathematics Subject Classification.} Primary 
14J30, 14J70, 14M20; Secondary 14N25, 32Q56.} 
\keywords{spray, Gromov's ellipticity, 
unirationality, stable rationality,
cubic threefold, cubic hypersurface,
affine cone.}
\newtheorem{theorem}{Theorem}[section]
\newtheorem*{theorem*}{Theorem}
\newtheorem*{conjecture*}{Conjecture}
\newtheorem{proposition}[theorem]{Proposition}
\newtheorem{lemma}[theorem]{Lemma}
\newtheorem{corollary}[theorem]{Corollary}
\newtheorem*{corollary*}{Corollary}
\newtheorem{question}[theorem]{Question}
\theoremstyle{definition}
\newtheorem*{example*}{Example}
\newtheorem{notation}[theorem]{Notation}
\theoremstyle{remark}
\newtheorem*{remark*}{Remark}
\newtheorem{remark}[theorem]{Remark}
\theoremstyle{remark}
\newtheorem*{remarks*}{Remarks}
\newtheorem{remarks}[theorem]{Remarks}
\newcommand{\PP}{{\mathbb P}}
\newcommand{\CC}{{\mathbb C}}
\newcommand{\QQ}{{\mathbb Q}}
\newcommand{\kk}{{\mathbb K}}
\def\cC{{\mathcal C}}
\def\cO{{\mathcal O}}
\newcommand{\A}{{\mathbb A}}
\newcommand{\ee}{\end{enumerate}}
\newcommand{\T}{\mathbb T}
\DeclareMathOperator{\End}{End}
\DeclareMathOperator{\id}{id}
\DeclareMathOperator{\Aut}{Aut}
\DeclareMathOperator{\SAut}{SAut}
\DeclareMathOperator{\SL}{SL}
\DeclareMathOperator{\Spec}{Spec}
\renewcommand{\subset}{\subseteq}
\renewcommand{\phi}{\varphi}
\begin{document}
\begin{abstract}
We show that every smooth cubic hypersurface 
$X$ in $\PP^{n+1}$, $n\ge 2$ is 
algebraically elliptic in Gromov's sense. 
This gives the first examples of non-rational 
projective manifolds elliptic in Gromov's sense.
We also deduce that the punctured affine cone 
over $X$ is elliptic. 
\end{abstract}
\maketitle

{\footnotesize \tableofcontents}

\section{Introduction}
Gromov's ellipticity appeared 
(and was extremely useful) 
in complex analysis
within the Oka-Grauert theory, see \cite{Gr}, 
\cite{For17a} and  \cite{For23}. 
In the same paper \cite{Gr} Gromov
developed an algebraic version of this notion. 
We deal below 
exceptionally with algebraic ellipticity. 
Thus, all varieties and vector bundles 
in this paper are algebraic, and `ellipticity' 
refers to algebraic ellipticity.

Let $\kk$ be an algebraically closed field  
of characteristic zero and $\A^n$ resp. $\PP^n$ 
be the affine resp. projective 
$n$-space over $\kk$. 

Given  a smooth algebraic variety $X$,
a \emph{spray of rank $r$} 
over $X$ is a triple $(E,p,s)$ where
$p\colon E\to X$ is  a vector bundle 
of rank $r$ with zero section $Z$ 
and $s\colon E\to X$ is a morphism  
such that $s|_Z=p|_Z$. A spray $(E,p,s)$ is 
\emph{dominating at $x\in X$} if the restriction 
$s|_{E_x}\colon E_x\to X$ 
to the fiber $E_x=p^{-1}(x)$ is dominant 
at the origin $0_x\in E_x\cap Z$ 
of the vector space $E_x$ i.e. $ds(T_0E_x)=T_xX$.

One says that the variety $X$ is \emph{elliptic} 
if it admits 
a spray $(E,p,s)$ which is dominating 
at each point $x\in X$, see \cite[3.5A]{Gr}. 
 It is immediate from the definition that 
every elliptic manifold is unirational. 
An algebraic variety $X$ is said to be 
\emph{uniformly rational} (or 
\emph{regular} in the terminology of 
\cite[3.5D]{Gr})
if every point $x\in X$ has a neighborhood in $X$
isomorphic 
to an open subset of $\A^n$. 
It is called \emph{stably uniformly rational}
 if $X\times\A^k$ is uniformly rational for some
 $k\ge 0$.
A complete stably uniformly rational 
variety is elliptic, see 
\cite[Theorem 1.3 and Corollary 3.7]{AKZ24}. 

It is shown in \cite[Example 2.4]{BB} that 
a rational smooth cubic hypersurface $X$ in 
$\PP^{n+1}$, $n\ge 2$, is
uniformly rational (cf. also \cite[Remark 3.5E$'''$]{Gr}). 
Hence $X$ is elliptic. 
There are examples of such hypersurfaces $X$
of any even dimension $n=2k\ge 2$. 

As another example, consider 
a nodal cubic threefold $X$ in $\PP^4$. It is well known 
that $X$ is rational. If $X$ has just a single node, say $P$, 
then the blowup
${\rm Bl}_P(X)$ of $X$ at the node is a smooth, 
uniformly rational (hence elliptic) threefold, 
see \cite[Proposition 3.1]{BB}. 
The same holds
for a small resolution $\tilde X\to X$
provided $X$ has several nodes and the resolution 
$\tilde X$ is an algebraic variety, 
see \cite[Theorem 3.10]{BB}. Such 
an algebraic resolution exists, for instance, if 
$X$ has exactly 6 nodes, see \cite[Proposition 3.8]{BB}. 
In the same spirit, one can construct
examples of uniformly rational small resolutions 
of nodal quartic double solids, see \cite{CZ24}. 

Yet another class of 
uniformly rational (and hence elliptic) projective 
varieties consists of 
smooth complete intersections of two quadrics in 
$\PP^{n+2}$, $n\ge 3$, see \cite[Example 2.5]{BB}.

Gromov asked in \cite[3.5B$''$]{Gr} whether 
any rational smooth projective
variety is elliptic; the answer is still unknown. 
Gromov also discussed in \cite[3.5B$''$]{Gr} 
a conjectural equivalence 
between ellipticity and unirationality. 
In the opposite direction, the question arises 
whether the ellipticity
implies the (stable, uniform) rationality. 

The answer to the latter question is negative.
 Indeed, for a certain natural number $n$ there exists 
a finite subgroup 
$F$ of $\SL(n,\CC)$ 
such that the quotient $Y=\SL(n,\CC)/F$ 
is stably irrational, see e.g. \cite[Theorem 3.6]{Sal84}, 
\cite{Bog87},
\cite[Example 1.22]{Po1} and \cite{Po2}.
Being an affine homogeneous 
manifold of a semisimple group,
$Y$ is flexible,
that is, the subgroup 
$\SAut(Y)\subset\Aut(Y)$
generated by all one-parameter 
unipotent subgroups of $\Aut(Y)$ 
acts highly transitively on $Y$, see 
\cite[Proposition 5.4]{AFKKZ13}. 
The latter implies that $Y$ is elliptic, 
see e.g. \cite[Proposition A.3]{AFKKZ13}. 
Thus, $Y$ is an example of a
stably irrational elliptic affine
manifold. 

Our aim is to find an  irrational smooth projective 
variety $X$ which is Gromov's elliptic. 
It is not certain that the above example can be explored 
for this purpose.
Indeed, given a smooth elliptic affine variety $Y$,
we lack the construction
of a smooth elliptic completion $X$ of
$Y$.

Nevertheless, 
we will establish the ellipticity of 
every smooth cubic threefold $X\subset\PP^4$, 
see Theorem \ref{mthm}.
By the celebrated Clemens-Griffiths 
theorem \cite{CG} such a threefold defined over $\CC$ 
is irrational. 
According to Murre \cite{Mur73}
the same holds for every smooth cubic threefold defined 
over an algebraically closed field of characteristic 
different from 2. 

Our main result is the following theorem.
\begin{theorem}\label{mthm} A smooth 
cubic hypersurface $X\subset\PP^{n+1}$ 
of dimension $n\ge 2$ is elliptic 
in the sense of Gromov. 
\end{theorem}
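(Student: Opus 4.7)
The strategy is to exploit the classical line geometry on a smooth cubic hypersurface to construct partial sprays, and then combine them using Gromov's composite spray construction. Every smooth cubic $n$-fold with $n \ge 2$ contains lines; in fact the Fano scheme $F(X)$ is a nonempty variety of dimension $2n-4$, and through every point of $X$ there passes at least one line. Fix a line $\ell \subset X$. Projection from $\ell$ realizes $X$ birationally as a conic bundle over $\PP^{n-1}$: a general plane $\Pi \supset \ell$ meets $X$ in $\ell \cup C_\Pi$, where $C_\Pi$ is a conic intersecting $\ell$ at two points. The two marked points on each smooth $C_\Pi$ induce a rational $\GG_m$-action on $X$ whose fixed locus contains $\ell$, and whose infinitesimal generator is a rational vector field tangent to the conics $C_\Pi$. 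After an appropriate twist absorbing the poles along $\ell$ and along the discriminant of the conic bundle, I expect this to yield a partial spray $(E_\ell, p_\ell, s_\ell)$ on $X$ whose differential at $0 \in (E_\ell)_p$ maps onto the tangent line to $C_\Pi$ at $p$ for general $p$.

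I would then choose a finite collection of lines $\ell_1,\ldots,\ell_k \subset X$ such that the tangent lines to the associated conics span $T_pX$ at every point $p \in X$. For a point lying on too many of the $\ell_i$, where the construction degenerates, one supplements by adding sprays from additional lines avoiding that point; the abundance of lines on $X$ (since $\dim F(X) = 2n-4$ and the incidence surjects onto $X$) should allow this. Once the spanning condition holds, Gromov's composite spray construction \cite[1.3]{Gr} (cf. \cite{For17a}) combines the individual sprays $(E_{\ell_i}, p_{\ell_i}, s_{\ell_i})$ into a single spray on $X$ dominating at every point, yielding ellipticity.

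The principal obstacle is the globalization of the partial sprays: converting the rational $\GG_m$-action on $X$ (with poles along $\ell$ and along the discriminant of the conic bundle) into a genuine morphism $s_\ell \colon E_\ell \to X$ of vector bundles defined over \emph{all} of $X$ requires a judicious choice of twist for $E_\ell$ and careful local analysis across the degeneration loci. A secondary difficulty is verifying the spanning condition at geometrically special points where many conic fibers acquire common tangent directions. The case of the cubic threefold ($n=3$) is likely the most delicate, since $F(X)$ is then only a surface and carries additional distinguished loci (such as lines of the second type with non-balanced normal bundle) that may interfere with generic arguments, potentially requiring supplementary constructions specific to that dimension.
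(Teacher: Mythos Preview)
Your geometric instinct is sound—the paper's spray orbits are precisely the residual conics in planes through a line $l\subset X$ (see the remark following Proposition~\ref{lem:1-spray}). But the mechanism you propose for extracting a spray from this picture has a real gap, and the paper fills it with two ideas you are missing.

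First, a $\GG_m$-action does not give a spray. A rank~1 spray at $p$ is a morphism $\A^1\to X$ sending $0\mapsto p$; the $\GG_m$-orbit map $t\mapsto t\cdot p$ instead sends the limits $t\to 0,\infty$ to the fixed points on~$\ell$, not to~$p$. Twisting the line bundle changes $E$, not where the orbit map sends the origin. Sprays arise naturally from $\GG_a$-flows (locally nilpotent derivations, polynomial exponential), not from $\GG_m$-flows, and since $\Aut(X)$ is finite by Matsumura--Monsky there is no global $\GG_a$-action to use either. Your phrase ``after an appropriate twist absorbing the poles \ldots\ I expect this to yield a partial spray'' is exactly the step that needs an idea, and none is supplied.

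The paper's substitute is the birational involution $\tau_u\colon X\dasharrow X$ sending $x$ to the third point of $\langle u,x\rangle\cap X$. Given an \emph{arbitrary} $y\in X$, choose a \emph{general} $x\in X$ and let $u$ be the residual point of $\langle x,y\rangle\cap X$. Then $\tau_u$ is biregular on the affine open $X\setminus S_u$ and carries $x$ to $y$. A line $l\subset X$ through $x$, punctured at its single point of $S_u$, is a copy of $\A^1$ with origin $x$; its image under $\tau_u$ is an $\A^1$-parameterized curve through $y$ (indeed one of your affine conics). Letting $u$ vary in a neighborhood produces a local rank~1 spray near $y$. Because the tangent directions of lines through a general point $x$ span $T_xX$ (Lemma~\ref{lem:n-lines}), and $d\tau_u$ is an isomorphism $T_xX\to T_yX$, the resulting sprays dominate at $y$. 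This reduction to a \emph{general} base point $x$ is what lets the paper avoid the ``geometrically special points'' you worry about in your last paragraph.

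Second, the globalization you flag as the principal obstacle is handled not by analyzing degenerations but by Gromov's extension lemma (Proposition~\ref{lem:ext}): on a \emph{complete} variety, any local spray with values in $X$ extends, after shrinking its base, to a spray defined on all of $X$. Combined with the localization lemma (Corollary~\ref{rem:Gr}), this reduces the whole problem to constructing, for each $y$, local rank~1 sprays whose orbit tangents span $T_yX$. Your outline never invokes completeness and therefore has no tool to pass from rational or locally defined data to a global spray.
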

\begin{remarks} $\,$\\
{\rm
1. It is not known
whether there exists
a stably (uniformly)
rational smooth cubic threefold, see e.g. 
\cite{CT19}. 
Also,  no example of an
irrational smooth cubic hypersurface 
of dimension $\ge 4$ is known.
Recall that every smooth cubic hypersurface 
of dimension 
$\ge 2$ is unirational, 
see \cite{Kol02} or \cite[Remark 5.13]{Hu}.

2. A smooth cubic threefold $X$ 
is far from being homogeneous. Indeed,
due to Matsumura--Monsky's theorem 
(see \cite{MM}) $\Aut(X)$ is a finite group. 

3. Notice that any rational smooth projective 
surface $X$ admits a covering  by open subsets 
isomorphic to $\A^2$. Hence 
it is uniformly rational and also elliptic.
In the sequel we deal with cubic hypersurfaces 
of dimension $\ge 3$.
}
\end{remarks}
Recall the following open question.
\begin{question}[{\rm \cite[Remark 3 3.5E$'''$]{Gr}}] 
Does the Gromov ellipticity of 
a smooth complete variety survive
the birational maps generated 
by blowups and 
contractions with smooth centers? 
\end{question}
There are some partial results 
concerning this question. 
The uniform rationality is 
preserved under blowups 
with smooth centers, 
see \cite[Proposition 2.6]{BB} 
(cf. \cite[3.5E-E$''$]{Gr}).  
Hence, also the ellipticity 
of a complete uniformly 
rational variety is preserved, 
see \cite[Corollary 1.5]{AKZ24}. 
For not necessary complete 
or uniformly rational elliptic varieties, 
the ellipticity is preserved under blowups 
with smooth centers
under some additional assumptions,
see \cite[Corollary 3.5.D$''$]{Gr}, 
\cite[Corollary 2]{LT17}, \cite{KKT18} 
and  \cite[Theorem 0.1]{KZ2}. 
The preservation of 
ellipticity under blowdowns with smooth 
centers is unknown. See also 
the discussion in \cite{Zai24}.

Forstneri\v{c} \cite{For17b} 
proved that  every elliptic  projective manifold $X$ 
of dimension $n$
admits a surjective morphism from $\CC^n$, 
whose restriction 
to an open subset of $\CC^n$
is smooth and also surjective. Due to Kusakabe \cite{Kus22}, 
this remains true for 
not necessary complete elliptic algebraic manifold $X$
over $\kk$, provided one replaces $\CC^n$ by 
$\A^{n+1}_{\kk}$. 

Recall that 
a \emph{generalized affine cone} $\hat Y$ 
over a smooth projective variety $X$ defined 
by an ample $\QQ$-divisor $D$ on 
$X$ is the affine variety 
\[\hat Y=\Spec\left(\bigoplus_{k=0}^\infty 
H^0\left(X,\cO_X(\lfloor kD\rfloor)\right)\right),\]
see, e.g., \cite[Sec. 1.15]{KPZ13}. 
In the case where  $D$ is a hyperplane section of 
$X\subset \PP^n$ the cone $\hat Y$ 
is the usual affine cone over $X$. 
The corresponding punctured cone $Y$ 
over $X$ is obtained from $\hat Y$
by removing the vertex of $\hat Y$; so $Y$ 
is a smooth quasiaffine variety.

Exploring the above results together with
\cite[Corollary 3.8 and Proposition 6.1]{KZ2}  
and Theorem \ref{mthm},
we deduce the following immediate corollary. 
\begin{corollary} Let $X\subset\PP^{n+1}$,
$n\ge 2$ 
be a smooth cubic hypersurface and $Y$ be a 
punctured generalized affine cone over $X$ defined 
by an ample polarization of $X$. Then the following hold.
\begin{itemize}
\item
$Y$ is elliptic in Gromov's sense.  
\item
There exist surjective morphisms $\A^{n+1}\to X$ resp.
$\A^{n+2}\to Y$ which are smooth and surjective
on appropriate open subsets $U\subset\A^{n+1}$ 
resp. $V\subset\A^{n+2}$.
\item If $\kk=\CC$, then there exists 
a surjective morphism $\A^n\to X$
 which is smooth and surjective
on an appropriate open subset $U\subset\A^n$.
\item The monoid of endomorphisms $\End(Y)$ 
acts $m$-transitively on $Y$ for every natural number $m$. 
\end{itemize}
\end{corollary}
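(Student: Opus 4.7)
The plan is to deduce each of the four items of the corollary directly from Theorem \ref{mthm} together with the results already cited in the introduction; nothing substantively new is required beyond assembling these ingredients. The single nontrivial input is Theorem \ref{mthm}, which supplies the ellipticity of $X$ in Gromov's sense.

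For the first item, the punctured generalized affine cone $Y$ over $X$ associated with an ample polarization is a smooth quasiaffine variety of dimension $n+1$, and \cite[Corollary 3.8 and Proposition 6.1]{KZ2}, cited just before the statement, propagate ellipticity from a smooth projective base $X$ to such a cone $Y$. Combined with Theorem \ref{mthm}, this gives ellipticity of $Y$ at once.

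For the second item, apply Kusakabe's theorem \cite{Kus22} separately to the two elliptic algebraic manifolds $X$ (of dimension $n$) and $Y$ (of dimension $n+1$). That theorem, recalled in the introduction, asserts that for every elliptic algebraic manifold $M$ of dimension $m$ over $\kk$ there is a morphism $\A^{m+1}_{\kk} \to M$ which is surjective and whose restriction to an appropriate open subset is smooth and also surjective. Applying it with $M = X$ and $M = Y$ yields the two morphisms $\A^{n+1} \to X$ and $\A^{n+2} \to Y$ with the required properties.

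For the third item, specialize to $\kk = \CC$ and apply Forstneri\v{c}'s theorem \cite{For17b} to the complex elliptic projective manifold $X$ of dimension $n$; this directly supplies a surjective morphism $\CC^n \to X$ whose restriction to an open subset of $\CC^n$ is smooth and surjective. The fourth item, the $m$-transitivity of $\End(Y)$ on $Y$ for every $m$, is exactly the content of \cite[Proposition 6.1]{KZ2} applied to the elliptic cone $Y$ supplied by the first item. The only genuine ``obstacle'' in the present corollary is thus Theorem \ref{mthm} itself, proved elsewhere in the paper; all four items here are essentially a bookkeeping exercise combining that theorem with the three cited black boxes of \cite{KZ2}, \cite{Kus22}, and \cite{For17b}.
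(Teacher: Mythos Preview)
Your proposal is correct and matches the paper's own treatment: the paper gives no proof at all, merely declaring the corollary ``immediate'' from Theorem \ref{mthm} together with \cite{For17b}, \cite{Kus22}, and \cite[Corollary 3.8 and Proposition 6.1]{KZ2}, and you have simply spelled out which cited black box handles which item. There is nothing to add or correct.
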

\section{Criteria of ellipticity }
For the proofs of the following proposition see
\cite[3.5B]{Gr}, 
\cite[Propositions 6.4.1 and 6.4.2]{For17a}, 
\cite[Remark 3]{LT17} 
and \cite[Appendix B]{KZ2}.
\begin{proposition}[Gromov's extension lemma]\label{lem:ext}
Let $X$ be a smooth complete variety and $(E,p,s)$ 
be a spray on an open subset $U\subset X$
with values in $X$, that is, $p\colon E\to U$ 
is a vector bundle on $U$ with zero section $Z$
and $s\colon E\to X$ 
is a morphism such that $s|_Z=p|_Z$. Then for each 
$x\in U$ there is a smaller open neighborhood $V\subset U$
of $x$ in $X$ and a spray $(E',p',s')$ on $X$ such that 
$(E',p',s')|_V=(E,p,s)|_V$.
\end{proposition}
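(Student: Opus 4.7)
My plan is to construct a global spray on $X$ by taking the given local spray on $W$ and twisting its fiber coordinates by a high power of a section of an ample line bundle vanishing outside $W$, so that the fiber collapses to the origin over the boundary --- which the spray condition pins to the identity on $X$ --- thus allowing a regular extension.

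First shrink $U$ to a Zariski-open affine neighborhood $W\ni x$ over which $E$ is trivializable, so the spray becomes a morphism $s\colon W\times\A^r\to X$ with $s(w,0)=w$. By completeness of $X$, one may further arrange $W=X\setminus\operatorname{supp}(D)$ for some very ample effective divisor $D$ on $X$ with $x\notin\operatorname{supp}(D)$; let $\ell\in H^0(X,\cO_X(D))$ be a defining section. For a positive integer $k$ to be chosen, set $E':=\cO_X(-kD)^{\oplus r}$ with projection $p'$ and zero section $Z'$. Pairing with $\ell^k\in H^0(X,\cO_X(kD))$ produces an $\cO_X$-linear sheaf map $\cO_X(-kD)\to\cO_X$, hence on total spaces a morphism $\mu\colon E'\to X\times\A^r$ that is a bundle isomorphism over $W$ and collapses every fiber over $X\setminus W$ onto the zero vector.

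Define $s':=\bar s\circ\mu$, where $\bar s$ jointly extends $s$ and the identity rule $(x,0)\mapsto x$ on $X\times\{0\}$; the two prescriptions agree on the overlap $W\times\{0\}$ by the spray identity. Assuming regularity of $s'$ on $E'$, the spray axioms follow: $p'$ is the structural projection, and $s'(x,0)=\bar s(x,0)=x$. Moreover $\mu|_W$ is a bundle isomorphism $E'|_W\simto E|_W$ intertwining $s'$ and $s$, so taking $V:=W$ realizes the required coincidence $(E',p',s')|_V=(E,p,s)|_V$.

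The main obstacle is verifying regularity of $s'$, an issue localized over $\operatorname{supp}(D)=X\setminus W$. On an affine open $V_0\subset X$ trivializing $\cO_X(-kD)$, the composition $s\circ\mu$ locally has the form $(u,v)\mapsto s(u,\ell_{V_0}(u)^k v)$, where $\ell_{V_0}$ is the local expression of $\ell$. Writing each component of $s$, in an affine chart of a projective embedding $X\hookrightarrow\PP^N$, as a polynomial $\sum_\alpha s_\alpha(u)w^\alpha$ with coefficients in $\cO_X(V_0)[1/\ell_{V_0}]$: the $\alpha=0$ term is $s(u,0)=u$, regular on $V_0$; each higher-order $s_\alpha$ has some finite pole order $m_\alpha$ along $\{\ell_{V_0}=0\}$, which the substitution $w=\ell_{V_0}^k v$ cancels via the factor $\ell_{V_0}^{k|\alpha|}$ as soon as $k|\alpha|\ge m_\alpha$. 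Quasi-compactness of $X\setminus W$ (closed in the complete variety $X$) reduces the estimate to finitely many $V_0$, so a uniform $k$ larger than the common maximum of $m_\alpha/|\alpha|$ suffices to give regularity and complete the argument.
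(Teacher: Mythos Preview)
The paper does not give its own proof of this proposition; it simply cites \cite[3.5B]{Gr}, \cite[Propositions 6.4.1--6.4.2]{For17a}, \cite[Remark 3]{LT17}, and \cite[Appendix B]{KZ2}. Your overall strategy---trivialize $E$ over an affine open $W=X\setminus D$, take $E'=\cO_X(-kD)^{\oplus r}$, and damp the fiber coordinates by $\ell^k$ so that the spray collapses to the identity along $D$---is exactly the standard argument in those references, so on the level of ideas you are aligned with the literature.

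There is, however, a genuine gap in your regularity verification. You claim that in an affine chart $A_j\subset\PP^N$ of the target, each component of $s$ is a polynomial $\sum_\alpha s_\alpha(u)\,w^\alpha$ with $s_\alpha\in\cO_X(V_0)[1/\ell_{V_0}]$. This is not justified: the dehomogenized components $s_i/s_j$ are regular only on $s^{-1}(A_j)$, which is typically a proper open subset of $(V_0\cap W)\times\A^r$. As rational functions on $(V_0\cap W)\times\A^r$ they are \emph{ratios} of polynomials in $w$, not polynomials, and their pole locus need not be confined to $\{\ell_{V_0}=0\}$---it can also include the pullback of the hyperplane $\PP^N\setminus A_j$, which the substitution $w\mapsto\ell_{V_0}^k v$ does nothing to clear. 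Consequently neither the finiteness of the set of multi-indices $\alpha$ nor the pole bound ``$m_\alpha$ along $\{\ell_{V_0}=0\}$'' is available as stated.

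The standard remedy is to argue with global homogeneous coordinates rather than affine charts of the target. After shrinking $W$ so that $s^*\cO_{\PP^N}(1)$ is trivial on $W\times\A^r$ (use $\Pic(W\times\A^r)\cong\Pic(W)$ and local triviality), write $s=[s_0:\cdots:s_N]$ with each $s_i\in\cO(W)[w_1,\dots,w_r]$ an honest polynomial in $w$. Now your pole-order bookkeeping applies cleanly to the coefficients with $|\alpha|\ge 1$: they lie in $\cO(W)=\bigcup_m H^0(X,\cO_X(mD))/\ell^m$ and are killed by $\ell^{k|\alpha|}$ for $k$ large. The constant terms $s_{i,0}$ present the inclusion $W\hookrightarrow\PP^N$; after multiplying the whole tuple by a suitable $\ell^{m_0}$ one obtains regular functions on $X\times\A^r$ whose common zero locus along $D\times\A^r$ is controlled by the fact that the identity $X\hookrightarrow\PP^N$ is everywhere defined, yielding the desired regular extension $s'\colon E'\to X$.
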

Recall that a variety $X$ is said to be \emph{locally elliptic} 
if for any $x\in X$ there is a local spray 
$(E_x,p_x,s_x)$ defined on 
a neighborhood $U$ of $x$ in $X$ and  dominating at $x$ 
such that $s_x\colon E\to X$ takes values in $X$. 
The variety $X$ is called 
\emph{subelliptic} if there is a family of sprays  
$\{(E_i,p_i,s_i)\}_{i\in I}$ 
on $X$ which is dominating at each 
point $x\in X$, that is,
\[T_xX=
{\rm span}({\rm d}s_i (T_{0_{i,x}} E_{i,x})\,|\,i\in I)
\quad \forall x\in X.\]
The following corollary on 
the equivalence of local and global ellipticity
 is given in \cite[3.5B$'$]{Gr}; 
see also \cite[Theorem 1.1 and Corollary 2.4]{KZ1}.
\begin{corollary}[Gromov's Localization Lemma]
\label{rem:Gr} 
The ellipticity of a smooth algebraic variety $X$ is equivalent 
to its local ellipticity. 
\end{corollary}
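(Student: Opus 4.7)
The forward implication---elliptic implies locally elliptic---is immediate: restricting a global dominating spray $(E,p,s)$ on $X$ to any open neighborhood $U$ of a point $x$ produces a local spray with values in $X$ dominating at $x$.

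For the converse, my plan is to use Gromov's extension lemma (Proposition \ref{lem:ext}) to pass from pointwise local data to a finite jointly dominating family of global sprays on $X$, and then to collapse this family to a single dominating spray by iterated composition of sprays. In detail: assume $X$ is locally elliptic (and, in order to apply Proposition \ref{lem:ext}, complete; the general case reduces to this or is handled as in \cite{KZ1}). For each $x\in X$, choose a local spray $(E_x,p_x,s_x)$ on a neighborhood $U_x$ of $x$, with values in $X$ and dominating at $x$. By Proposition \ref{lem:ext} there exist a smaller neighborhood $V_x\subset U_x$ of $x$ and a global spray $(\tilde E_x,\tilde p_x,\tilde s_x)$ on $X$ whose restriction to $V_x$ agrees with $(E_x,p_x,s_x)|_{V_x}$. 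In particular $(\tilde E_x,\tilde p_x,\tilde s_x)$ is dominating at $x$, and by openness of the dominance condition (the differential of $\tilde s_x$ restricted to the fibers of $\tilde E_x$ has maximal rank on an open set), it is also dominating on some open neighborhood $W_x\subset V_x$ of $x$.

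The family $\{W_x\}_{x\in X}$ is an open cover of the Noetherian scheme $X$, hence admits a finite subcover $W_{x_1},\ldots,W_{x_N}$. The associated finite family of global sprays $\{(\tilde E_{x_i},\tilde p_{x_i},\tilde s_{x_i})\}_{i=1}^N$ then dominates at each point of $X$; in other words, $X$ is subelliptic. To finish, I would invoke the standard passage from a finite subelliptic family to a single dominating spray: iterating $N-1$ times the composition of sprays---pulling back each spray bundle along the preceding spray map, so that the images of the fiber differentials at a given point add up---produces one global spray on $X$ dominating everywhere. This is the construction recorded in \cite[3.5B$'$]{Gr} and \cite[Theorem 1.1]{KZ1}.

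I expect the main obstacle to be hidden in this last step: organizing the composition so that the resulting total space carries an honest vector-bundle structure over $X$ in the algebraic category (the naive iterated pullback is only a fiber bundle), and checking that the zero-section condition $s|_Z=p|_Z$ is preserved while the dominance cones accumulate correctly. Once this is set up, the key point is elementary---if one member of the family dominates at $x$, the composed spray inherits the dominance at $x$---so the finite subcover produced in the previous paragraph indeed yields a spray dominating on all of $X$.
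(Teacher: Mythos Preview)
Your outline follows the same broad strategy as the paper---extend local data to global sprays via Proposition~\ref{lem:ext}, cover $X$ by finitely many, then compose---but it misses the one technical move that makes the composition step go through. You extend the full local sprays $(E_x,p_x,s_x)$, of a priori arbitrary rank, and then propose to compose the resulting global sprays. As the paper's Remark immediately following the proof stresses, Gromov's composition of sprays of rank $>1$ does \emph{not} yield a spray in general: the iterated pullback is only an affine-bundle, not a vector bundle, so the zero section need not sit inside a linear structure. You flag exactly this as the obstacle, but you do not supply the fix.

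The paper's proof handles this by an extra reduction \emph{before} applying the extension lemma: shrink each $U_x$ so that $E_x$ is trivial, then split $E_x\cong U_x\times\A^r$ into a direct sum of trivial line bundles, obtaining $r$ local rank~$1$ sprays whose orbit directions jointly span $T_xX$. It is these rank~$1$ sprays that are extended globally via Proposition~\ref{lem:ext} and then composed; for rank~$1$ sprays the composition \emph{is} again a spray (see \cite[Proposition~2.1]{KZ1}), and domination accumulates as you describe. So the missing ingredient in your proposal is precisely this decomposition into rank~$1$ pieces prior to extension; once you insert it, your argument coincides with the paper's.
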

\begin{proof}[Sketch of the proof] 
Obviously, ellipticity implies local ellipticity.
Suppose that $X$ is locally elliptic. 
Choose a finite covering of $X$ that is
equipped with sprays with values in $X$ 
dominating on the elements 
of this covering. We can assume that 
the vector bundles 
of these sprays are trivial. Thus, they can be 
decomposed into a 
direct sum of trivial line bundles. 
By Proposition \ref{lem:ext} one can extend 
the resulting rank 1 
sprays to sprays defined 
on the whole $X$,
and then compose them. 
For rank 1 sprays the composition is again a spray, 
see \cite[Proposition~2.1]{KZ1},
which is dominating in our case. 
\end{proof}
\begin{remark}
The composition of sprays is defined in \cite[1.3.B]{Gr}
for sprays of arbitrary ranks. 
It is mentioned  in \cite[Section~1.3]{Gr}
that this does not give a spray, in general, if the ranks 
of participating sprays
are $>1$. Note, however, that this 
circumstance is omitted in the proof of 
Lemma 3.5B in  \cite{Gr}.
\end{remark}
Likewise, we obtain the following result.
\begin{proposition}[\rm{\cite[Theorem 1.1]{KZ1}}] 
\label{prop:KZ}
The ellipticity of a smooth variety is equivalent 
to its subellipticity. 
\end{proposition}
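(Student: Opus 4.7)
The plan is to reduce subellipticity to local ellipticity and then invoke the Localization Lemma (Corollary \ref{rem:Gr}). One direction is immediate: any elliptic variety is automatically subelliptic with a one-element family, so I only need to argue that subellipticity implies ellipticity.

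Fix a point $x\in X$ and let $\{(E_i,p_i,s_i)\}_{i\in I}$ be a subelliptic family on $X$. By hypothesis there exist finitely many indices $i_1,\ldots,i_k\in I$ such that the images $\mathrm{d}s_{i_j}(T_{0_{i_j,x}}E_{i_j,x})$ together span $T_xX$. Choose a small affine open neighborhood $V\subseteq X$ of $x$ over which every $E_{i_j}$ trivializes, and split each trivialized bundle into a direct sum of trivial line sub-bundles. Restricting each $s_{i_j}$ to these sub-bundles produces finitely many rank one sprays $\sigma_1,\ldots,\sigma_N$ on $V$ taking values in $X$, whose differentials at the origins over $x$ still collectively span $T_xX$.

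Now form the composition $\sigma=\sigma_N\circ\cdots\circ\sigma_1$ in the sense of \cite[1.3.B]{Gr}. As recalled in the sketch of Corollary \ref{rem:Gr}, the composition of rank one sprays is again a spray, with underlying bundle the iterated pullback of the direct sum of the constituent line bundles. In local coordinates centered at $x$, the restriction of $\sigma$ to the fiber over $x$ has the form $(t_1,\ldots,t_N)\mapsto x+\sum_{j=1}^N t_jv_j(x)+O(|t|^2)$, where each $v_j(x)$ is the image under $\mathrm{d}\sigma_j$ of the canonical generator of the $j$-th line. Since the $v_j(x)$ span $T_xX$ by construction, $\sigma$ is dominating at $x$. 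As $x$ was arbitrary, $X$ is locally elliptic, and Corollary \ref{rem:Gr} yields ellipticity.

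The main technical point to keep in view, flagged in the remark following Corollary \ref{rem:Gr}, is that the composition of sprays of rank $>1$ need not be a spray. This is precisely why the argument passes through line sub-bundles of $E_{i_j}|_V$ before composing: doing so keeps every intermediate composition inside the category of sprays while preserving the span of differentials needed to achieve domination at $x$. The only genuine input beyond bookkeeping is the rank one composition lemma from \cite{KZ1} and Gromov's Localization Lemma itself.
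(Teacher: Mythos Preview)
Your overall strategy---trivialize, split into rank~1 line sub-bundles, and compose---is exactly the route the paper indicates by its one-word proof ``Likewise'' (pointing back to the sketch of Corollary~\ref{rem:Gr}). But there is a real gap at the composition step. Gromov's composition builds the total space of $\sigma_{j+1}\circ\sigma_j$ as the pullback $s_j^*E_{j+1}$; over a point $y\in V$ one needs the fiber of $E_{j+1}$ at $s_j(y,t)$, so $\sigma_{j+1}$ must be defined at $s_j(y,t)$. Your $\sigma_j$ live only on $V$ while taking values in all of $X$, and as soon as $t\neq 0$ the point $s_j(y,t)$ may leave $V$, so the pullback is undefined there. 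At best you obtain a morphism on an open neighborhood of the zero section in $V\times\A^N$, which is not a spray in the paper's sense (the domain is not a vector bundle), and hence does not witness local ellipticity as defined.

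This is precisely why, in the sketch you cite, Proposition~\ref{lem:ext} is invoked \emph{before} composing: once each rank~1 spray has been extended to a spray on all of $X$, the iterated composition is a genuine global spray, dominating at $x$ and hence on a neighborhood of $x$. Inserting that extension step repairs your argument. Note also that after doing so your final appeal to Corollary~\ref{rem:Gr} becomes a detour: having global rank~1 sprays in hand, one can compose the finitely many coming from a finite set of points whose neighborhoods cover $X$ and obtain a single dominating spray directly---which is what the paper's ``Likewise'' is meant to convey.
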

Given a spray $(E,p,s)$ with values in $X$ and 
a point $x\in X$, 
the constructible subset $O_x:=s(E_x)\subset X$ 
is called the \emph{$s$-orbit} of $x$.
The proof of Proposition \ref{prop:KZ}
leads to the following lemma, cf. \cite{KZ1}.
\begin{lemma}\label{cor:crit-ell-2}
A smooth variety $X$ of dimension $n$ 
is elliptic if for every $x\in X$ 
there exist local rank 1 sprays
$(E_i,p_i,s_i)$, $i=1,\ldots,n$ with values in $X$ 
defined on respective 
neighborhoods $U_i$ of $x$ 
such that the 
$s_i$-orbits $O_{i,x}$ of $x$ are curves with 
 local parameterizations 
$s\colon (E_{i,x},0)\to (O_{i,x},x)$ \'etale at $x$
whose tangent  vectors at $x$ span $T_xX$. 
\end{lemma}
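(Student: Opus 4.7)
The plan is to invoke Gromov's Localization Lemma (Corollary \ref{rem:Gr}), which reduces matters to showing that $X$ is locally elliptic: for every $x\in X$ one must exhibit a single spray on some neighborhood of $x$ with values in $X$ that is dominating at $x$. The hypothesis supplies $n$ local rank~$1$ sprays whose dominant directions at $x$ span $T_xX$, and the task is to fuse them into one dominating spray of rank $n$.

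Fix $x\in X$ and set $U:=U_1\cap\cdots\cap U_n$. After shrinking $U$ I may assume that for every $i$ the morphism $s_i$ sends some neighborhood of the zero section of $E_i|_U$ back into $U$; this keeps iterated applications of the $s_j$'s inside the common domain. I would then form the iterated composition of the rank~$1$ sprays $(E_i,p_i,s_i)$ as in \cite[1.3.B]{Gr}, producing a rank~$n$ vector bundle $E$ over $U$ (possibly after further shrinking) together with a morphism $s\colon E\to X$ restricting to the projection $p$ on the zero section. Since all of the $s_i$ have rank $1$, the composition is again a spray, as already used in the proof sketch of Corollary \ref{rem:Gr}.

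It remains to check that $(E,p,s)$ dominates at $x$. The fiber $E_x$ splits as $\bigoplus_i (E_i)_x$, and because each $s_j$ fixes its zero section pointwise, the differential $ds$ at $0_x$ restricted to the summand $T_{0_x}(E_i)_x$ reduces to $ds_i$, which by the \'etaleness assumption is an isomorphism onto the tangent line of the orbit $O_{i,x}$ at $x$. The hypothesis that these $n$ tangent lines span $T_xX$ then makes $ds\colon T_{0_x}E_x\to T_xX$ surjective, establishing local ellipticity at $x$; Corollary \ref{rem:Gr} then yields the ellipticity of $X$. The main technical subtlety lies precisely in this derivative computation: one must confirm that the off-diagonal cross-terms in the differential of the composed spray vanish at the zero section, which is exactly what the rank~$1$ composition rule guarantees via the identities $s_j|_{Z_j}=p_j|_{Z_j}$ applied one factor at a time.
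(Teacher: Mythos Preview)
Your overall strategy matches the paper's: reduce to local ellipticity via Corollary~\ref{rem:Gr}, compose the $n$ rank~$1$ sprays into a single rank~$n$ spray, and verify domination at $x$ by computing the differential on each summand. The differential computation at the end is correct.

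The gap is in how you set up the composition. The composition of sprays in \cite[1.3.B]{Gr} requires, at each stage, that the image of the spray map lie in the base of the next bundle. Your $s_i$ are defined only over $U=U_1\cap\cdots\cap U_n$ but take values in all of $X$; hence $s_1(E_1|_U)$ need not land in $U$, and $s_2$ cannot be applied to it. Your proposed remedy---restricting to a Zariski neighborhood $W$ of the zero section in the fiber direction---does not yield a spray: the domain $W$ is no longer a vector bundle, and in the algebraic category there is no rescaling trick that maps the full fiber $E_x\cong\A^1$ into the cofinite open set $W\cap E_x\subsetneq\A^1$ (any nonconstant morphism $\A^1\to\A^1$ is surjective). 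So ``producing a rank~$n$ vector bundle $E$ over $U$ (possibly after further shrinking)'' is precisely the step that fails.

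The paper avoids this entirely by first invoking Proposition~\ref{lem:ext}: after shrinking $U_i$, each local rank~$1$ spray extends to a spray defined on all of $X$. Once the $(E_i,p_i,s_i)$ are global, the iterated composition is well defined on the whole vector bundle, and then your differential argument (which coincides with the paper's) shows domination at $x$. In short, the missing ingredient in your proof is the extension step; insert it before composing and the rest of your argument goes through.
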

\begin{proof}[Sketch of the proof] Shrinking $U_i$ if necessary
we may suppose 
that $(E_i,p_i,s_i)$ is defined on the whole 
$X$ for $i=1,\ldots,n$, 
see  Proposition \ref{lem:ext}. 
Since the tangent lines $ds(T_0E_{i,x})$  at $x$ 
to the $s_i$-orbits 
$O_{i,x}$, $i=1,\ldots,n$
span $T_xX$,
the composition of these extended sprays is a spray 
of rank $n$ on $X$ dominating at $x$. 
This domination spreads to a neighborhood 
of $x$ in $X$.
It follows that $X$ is locally elliptic (and also subelliptic). 
By Gromov's Localization Lemma,
see Corollary \ref{rem:Gr} (or alternatively 
by Proposition 
\ref{prop:KZ})
$X$ is elliptic. 
\end{proof}
In our proof of Theorem \ref{mthm} in the next section 
we construct on any smooth cubic hypersurface $X$ 
of dimension $n\ge 3$ an $n$-tuple of 
independent rank 1 sprays 
$(E_i,p_i,s_i)$ dominating in the orbit directions. 
\section{Ellipticity of cubic hypersurfaces}\label{sec:proof}
We use the following notation. 
\begin{notation}
For a subset $M\subset\PP^n$ 
we let $\langle M\rangle$ 
be the smallest projective subspace 
which contains $M$. 
Given a smooth hypersurface $X\subset\PP^{n+1}$ 
and a point $x\in X$ we let $S_x=\T_xX\cap X$, 
where $\T_xX$ stands for the projective tangent space 
of $X$ at $x$. Let $\cC_x$  
stand for the union of lines 
on $X$ through the point $x\in X$. 
Clearly, $\cC_x\subset S_x$. 
Recall that for $n\ge 3$ every smooth cubic hypersurface 
$X\subset\PP^{n+1}$
is covered by projective lines 
(this follows, for instance, from \cite[Corollary 8.2]{CG}). 
Therefore, $\cC_x$ has positive dimension for every 
$x\in X$. 

Given a cubic threefold $X\subset\PP^{4}$, 
the equality $\cC_x=S_x$
holds if and only if 
$x$ is an Eckardt point of $X$. 
Recall that a point $x\in X$ is called an 
\emph{Eckardt point} if there is an  infinite number 
of lines on $X$ passing through $x$. In the latter case 
$\cC_x$ is the cone with vertex $x$
over a plane elliptic cubic curve. 
Notice that a general cubic threefold 
has no Eckardt points, 
a cubic threefold can contain at most 
30 Eckardt points (see \cite[p. 315]{CG}), 
and the maximal number of 30 Eckardt points
is attained only for the Fermat cubic threefold, 
see \cite[p. 315]{CG}, \cite{Rou09} 
and \cite[Chapter 5, Remark 1.7]{Hu}. 
\end{notation}
\begin{lemma}\label{lem:n-lines}
Given a smooth cubic hypersurface 
$X\subset\PP^{n+1}$ where $n\ge 3$,  
the tangent space 
$T_xX$ at a  general point 
$x\in X$ is spanned by some $n$ 
tangent lines to 
projective lines on 
$X$ passing through $x$.
\end{lemma}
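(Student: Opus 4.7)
The plan is to describe $\cC_x$ explicitly inside the projectivized tangent space $\PP(T_xX) \cong \PP^{n-1}$ and to show that, for general $x$, the resulting subvariety spans $\PP(T_xX)$; one can then choose $n$ of its points in general linear position to obtain $n$ lines on $X$ through $x$ whose tangent lines at $x$ span $T_xX$.

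First I would pick affine coordinates so that $x$ is the origin and the projective tangent hyperplane $\T_xX$ is $\{y_0 = 0\}$. The equation of $X$ then takes the form $f = y_0 + g_2 + g_3$ with $g_i$ homogeneous of degree $i$. A nonzero direction $v \in T_xX$ yields a line on $X$ through $x$ precisely when $Q(v) := g_2(0,v) = 0$ and $C(v) := g_3(0,v) = 0$, so the set of tangent directions to lines in $\cC_x$ is the complete intersection $F_x := V(Q) \cap V(C) \subset \PP(T_xX) = \PP^{n-1}$ of type $(2,3)$. The lemma is then equivalent to saying that $F_x$ is not contained in any hyperplane of $\PP^{n-1}$.

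The crux will be an ideal-theoretic observation: if $F_x \subset V(L)$ for a nonzero linear form $L$, then $L \in \sqrt{(Q,C)}$. Provided the ideal $(Q,C)$ is already radical, this gives $L = AQ + BC$ for some polynomials $A, B$, and a degree comparison ($\deg L = 1$, $\deg Q = 2$, $\deg C = 3$) forces $A = B = 0$, hence $L = 0$, a contradiction. It therefore suffices to show that $F_x$ is reduced for general $x$.

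For this I would use that $Q$ is, up to a scalar, the second fundamental form of $X$ at $x$. Since a smooth cubic hypersurface of dimension $\ge 2$ is a non-linear smooth hypersurface, its Gauss map is finite (by Zak's theorem on tangencies), hence generically \'etale in characteristic zero, so the second fundamental form is generically non-degenerate on $X$. Consequently, for general $x$ the quadric $Q \subset \PP^{n-1}$ is smooth, so $(Q,C)$ is a genuine codimension-$2$ complete intersection and in particular Cohen--Macaulay. Combining this with generic smoothness (in characteristic zero) of the projection to $X$ of the universal line $\{(x,\ell) : x \in \ell \subset X\}$ --- for which one uses that the Fano scheme of lines on a smooth cubic hypersurface is smooth of dimension $2n-4$ --- one obtains that a general fiber $F_x$ is smooth, and in particular reduced. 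I expect the main obstacle to be supplying these two ``general $x$'' inputs (smoothness of $Q$ and reducedness of $F_x$) uniformly for every smooth cubic $X$; if one prefers to avoid invoking smoothness of the Fano scheme, a Jacobian/Bertini argument directly in the incidence family $\{(x,v) : x \in X,\; v \in T_xX\setminus 0\}$ should give reducedness of $F_x$ at a general $x$ just as well.
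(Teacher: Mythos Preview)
Your argument is correct and proceeds quite differently from the paper. The paper first treats $n=3$ directly, using the classical fact that through a general point of a smooth cubic threefold pass exactly six distinct lines, no four of them coplanar. For $n\ge 4$ it argues by contradiction and reduction: if $\langle\PP\cC_x\rangle\subsetneq\PP T_xX$ for general $x$, then for a general $\PP^4$-section $Y\subset X$ (a smooth cubic threefold) and a general $x\in Y$ the plane $\PP T_xY$ would be transversal to the proper subspace $\langle\PP\cC_x\rangle$ in $\PP T_xX$; but the threefold case forces $\PP T_xY\subset\langle\PP\cC_x\rangle$, since lines on $Y$ through $x$ are lines on $X$ through $x$, a contradiction.

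Your approach is uniform in $n$: identifying $\PP\cC_x$ with the $(2,3)$ complete intersection $V(Q,C)\subset\PP(T_xX)\cong\PP^{n-1}$, you observe that a radical homogeneous ideal generated in degrees $\ge 2$ contains no nonzero linear form, so $V(Q,C)$ spans as soon as $(Q,C)$ is radical. The price is justifying radicality, for which you invoke smoothness of the Fano scheme of lines on $X$ and generic smoothness of the universal line over $X$; this is heavier input than the paper's slice-and-transversality argument, but it yields a clean dimension-independent proof and recovers the threefold case rather than relying on it. Incidentally, your appeal to Zak's theorem for nondegeneracy of $Q$ is not strictly needed: once the general fibre $V(Q,C)$ is smooth of the expected dimension $n-3$, it is automatically a complete intersection, hence saturated and Cohen--Macaulay, and reducedness already gives radicality of $(Q,C)$.
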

\begin{proof}
Let us start with the case $n=3$, that is,
 let $X$ be a smooth cubic threefold.
Through a general point $x\in X$ pass 
exactly 6 lines, see e.g. 
\cite[Proposition (1.7)]{AK77}
or \cite[Chapter 5, Exercise 1.4]{Hu}. 
More precisely, there is a proper closed subset
$Y\subset X$ swept out by 
the ``lines of the second type'' on $X$, and through every 
point $x\in X\setminus Y$
pass exactly 6 distinct lines on $X$, 
see \cite[Lemma (1.19)]{Mur72}.
These 6 lines form the cone $\cC_x$. 
No four of them are coplanar, 
hence the tangent space $T_xX$
is spanned by the tangent lines to
some three of these projective lines through $x$.

Let us show now that the latter property 
holds as well in higher dimensions.
Given a smooth cubic hypersurface 
$X\subset\PP^{n+1}$, $n\ge 4$ and a point
$x\in X$, consider the projective variety 
\[\PP\cC_x:=\{T_xl\,|\, x\in l\subset \cC_x\}\subset 
\PP T_xX\]
where $l$ stands for a line on $X$. 
Clearly, $T_xX$ is spanned 
by the tangent lines $T_xl$ 
for $l\subset \cC_x$ if and only if $\PP\cC_x$ 
is linearly nondegenerate in 
$\PP T_xX\cong\PP^{n-1}$.
There exists an open dense subset $U\subset X$
such that for every $x\in U$ the projective cone 
$\cC_x$ has
codimension 2 in $\T_xX$. The dimension 
$\dim\,\langle \PP\cC_x\rangle$ 
is lower semicontinuous on 
$U$ and attains its
maximal value, say $m$
on an open dense subset $U_0\subset U$. 
Let $\mathcal{V}\subset\PP TX|_{U_0}$ 
be the subvariety swept out by the 
$\langle \PP\cC_x\rangle$ for $x\in U_0$. 

Suppose that, contrary 
to the assertion of the lemma,   $m<n-1$, so that
$\mathcal{V}$ is a proper closed subset of 
$\PP TX|_{U_0}$. Choose a general linear section 
 $Y$ of $X$ by a subspace $\PP^4\subset\PP^{n+1}$. 
Then $Y$ is a smooth cubic threefold. 
Furthermore, for a general $x\in Y\cap U_0$
the proper subspaces $\PP T_xY\cong\PP^2$ 
and $\langle \PP\cC_x\rangle\cong\PP^m$ of 
$\PP T_xX\cong\PP^{n-1}$ are transversal. 
On the other hand, 
$T_xY$ is spanned 
by the tangent lines to the lines on $Y$ 
passing through $x$.
Thus, we have $\PP T_xY\subset \langle \PP\cC_x\rangle$. 
This contradiction ends the proof. 
\end{proof}
\begin{notation}
For a general point $u\in X$ we let $S^*_u$ be the set of 
points $x\in X$ such that $u\in \T_xX$. Let $x\in S^*_u$ 
be a  general point
and $u^*=\T_uX$ resp. $x^*=\T_xX$
be the corresponding points of the dual hypersurface 
$X^*\subset (\PP^{n+1})^{\vee}$.
Then $u\in \T_xX$ if and only if 
$x^*\in \T_{u^*}X^*$, that is, 
$x\in S^*_u$  if and only if  $x^*\in S_{u^*}$. 
It follows that
$S^*_u$ is a hypersurface in $X$ 
passing through $u$. 
It is easily seen that $\cC_u\subset S_u^*$, 
and so $\cC_u\subset S_u\cap S_u^*$.
\end{notation}
\begin{lemma}\label{lem:bitangent}
For a smooth cubic hypersurface 
$X\subset \PP^{n+1}$, $n\ge 1$
one has $S_u\cap S_u^*=\cC_u$ for every $u\in X$.
\end{lemma}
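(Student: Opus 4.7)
The plan is to exploit the classical polarization identity for the defining cubic, which converts each tangency condition packaged into $S_u\cap S_u^*$ into the vanishing of one coefficient of $F$ restricted to the line $\langle u,x\rangle$. The inclusion $\cC_u\subseteq S_u\cap S_u^*$ is already recorded in the paragraph immediately preceding the lemma, so only the reverse inclusion needs work.

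First I would pick an arbitrary $x\in S_u\cap S_u^*$ and reduce to the case $x\ne u$ (the case $x=u$ being immediate under the convention $u\in\cC_u$). Let $F$ be a defining cubic form for $X$ and let $\tilde F$ denote its symmetric trilinear polarization, so that $F(y)=\tilde F(y,y,y)$ and, by Euler's relation, $dF_y(v)=3\tilde F(y,y,v)$. Parametrizing the projective line $\ell=\langle u,x\rangle$ by $[s:t]\mapsto[su+tx]$, the restriction $F|_\ell$ expands as
\[
F(su+tx)=s^3\tilde F(u,u,u)+3s^2 t\,\tilde F(u,u,x)+3s t^2\,\tilde F(u,x,x)+t^3\tilde F(x,x,x).
\]

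Next I would match each of the four conditions encoded in $x\in S_u\cap S_u^*$ with the vanishing of one of these coefficients. The memberships $u\in X$ and $x\in S_u\subseteq X$ give $\tilde F(u,u,u)=\tilde F(x,x,x)=0$; the condition $x\in\T_uX$ (implicit in $x\in S_u$) says $dF_u(x)=3\tilde F(u,u,x)=0$; and $u\in\T_xX$, the defining condition of $S_u^*$, says $dF_x(u)=3\tilde F(u,x,x)=0$. Hence $F$ vanishes identically on $\ell$, so $\ell\subseteq X$ is a line on $X$ through $u$, and in particular $x\in\ell\subseteq\cC_u$.

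The argument is essentially a one-line polarization trick, so no substantive obstacle is expected beyond fixing notation; the only point requiring a word of care is the edge case $x=u$, handled above. The low-dimensional case $n=1$ is also consistent with the statement, since the same polarization identity forces $S_u\cap S_u^*\subseteq\{u\}$, matching the fact that a smooth plane cubic carries no lines.
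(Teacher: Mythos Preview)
Your proof is correct and follows essentially the same approach as the paper: both show that for $x\in S_u\cap S_u^*$ with $x\ne u$ the line $\langle u,x\rangle$ lies on $X$, whence $x\in\cC_u$. The only difference is cosmetic---the paper phrases this via B\'ezout (a bitangent line to a cubic must be contained in it), whereas you unwind the same intersection-multiplicity statement explicitly via the trilinear polarization.
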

\begin{proof} 
If $x\in S_u\cap S_u^*$ is different from $u$ then 
$l:=\langle x,u\rangle\subset \T_xX\cap \T_uX$. 
By the B\'ezout theorem,
the bitangent line $l$ of $X$
is contained in $X$.
It follows that $x\in \cC_u$. Hence
we have $S_u\cap S_u^*\subset \cC_u$. 
Since also $\cC_u\subset S_u\cap S_u^*$ one has
$S_u\cap S_u^*= \cC_u$.
\end{proof}

Let again $X$ be a smooth cubic hypersurface 
in $\PP^{n+1}$, $n\ge 1$. 
Following \cite[Example 2.4]{BB} and \cite{BKK} 
let us consider the birational 
self-map $\tau_u\colon X\dasharrow X$
which sends a general point $x\in X$ 
to the third point $y=\tau(x)$ 
of intersection of the line
$\langle x,u\rangle$ with $X$. 
Thus, $X$ cuts out on the line
$\langle x,u\rangle$ the reduced divisor $x+u+y$. 
In fact, $\tau_u$ is well defined unless 
$x=u$ or $x\neq u$ and the line $\langle x,u\rangle$ 
is contained in $X$.
Thus, $\tau_u\colon X\dasharrow X$ is regular on 
$X\setminus \cC_u$. 
\begin{lemma}\label{lem:fixed-pts}
Let $X\subset \PP^{n+1}$, $n\ge 1$ 
be a smooth cubic hypersurface 
 and $x,u\in X$ be such that
$x\in X\setminus \cC_u$.
Then $\tau_u(x)=\tau_u(x')$ for a point $x'\in X$
if and only if either 
$u\in \T_xX$ (i.e. $x\in S_u^*$) and $x'=x$,  or
$x\in \T_uX$ (i.e. $x\in S_u$) and $x'=u$. 
\end{lemma}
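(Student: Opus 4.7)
The plan is to exploit the definition of $\tau_u$ via the secant line $\ell:=\langle u,x\rangle$, which, for $x\in X\setminus\cC_u$, cuts out on $X$ the effective degree-$3$ divisor $X\cdot\ell=x+u+y$ with $y:=\tau_u(x)$. If $\tau_u$ is defined at $x'$ and $x'\neq u$, the same definition gives $X\cdot\langle u,x'\rangle=x'+u+y$ as soon as $\tau_u(x')=y$. The argument will split according to whether $y$ coincides with $u$ or not.

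First, consider the case $y\neq u$. Then the lines $\langle u,x\rangle$ and $\langle u,x'\rangle$ share the two distinct points $u$ and $y$ and therefore coincide, which forces $x'\in\ell\cap X=\{x,u,y\}$. The option $x'=y$, combined with the involutive character of $\tau_u$ on its domain, gives $x=\tau_u(y)=\tau_u(x')=y$; equivalently, $X\cdot\ell=2x+u$, so $\ell$ is tangent to $X$ at $x$ and $u\in\T_xX$. In this situation $x'=y=x$, which is case (a) of the lemma. Next, consider the case $y=u$. Then $X\cdot\ell=x+2u$, meaning $\ell$ is tangent to $X$ at $u$, so $\ell\subset\T_uX$ and thereby $x\in S_u$; under the natural extension of $\tau_u$ at the point $u$ one has $\tau_u(u)=u=\tau_u(x)$, so $x'=u$ is a solution, yielding case (b). The converse directions in both cases are immediate from the same divisor equations.

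The chief obstacle I foresee is the rigorous extension of $\tau_u$ across its indeterminacy at $u$. I would address this by passing to the blowup $\mathrm{Bl}_u X$ and tracking how $\tau_u$ extends along the exceptional divisor $E\cong\PP\T_uX$, or equivalently by a direct affine-coordinate computation near $u$ based on a local equation for the cubic (expand a homogeneous cubic defining $X$ as $w^2A_1(z)+wA_2(z)+A_3(z)$ after placing $u$ at $[0{:}\cdots{:}0{:}1]$, so that $\T_uX=\{A_1=0\}$). Once this extension is made precise, the remaining bookkeeping is just the finite case-check on the three candidates $x,u,y\in\ell\cap X$ for $x'$, together with ruling out spurious candidates by the observation that whenever $y\neq u$ the extended value $\tau_u(u)=u$ prevents $x'=u$ from satisfying $\tau_u(x')=y$.
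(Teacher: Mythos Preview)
Your overall line is the same as the paper's: use the secant $\ell=\langle u,x\rangle$, the degree-$3$ divisor $X\cdot\ell$, and the involutive behavior of $\tau_u$ to force $x'\in\{x,u\}$ and then read off the two tangency conditions. The paper does not split on $y=u$ versus $y\neq u$; it just notes that $\langle x,u\rangle=\langle x',u\rangle$, eliminates $x'\notin\{x,u\}$ by the involution trick (exactly your $x'=y\Rightarrow x=y$ step), and reads off the two divisor shapes $2x+u$ and $x+2u$. So on the main argument you are aligned with the paper.

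Where you go astray is the ``chief obstacle'' and its resolution. Your claim that under the natural extension $\tau_u(u)=u$ is false. After blowing up $u$, the lift of $\tau_u$ sends a tangent direction $[v]\in\PP T_uX$ to the residual point of the tangent line $\ell_v$ in $X\cdot\ell_v=2u+y_v$; this $y_v$ varies with $v$ and equals $u$ only for inflectional directions. Concretely, with $u$ at the origin and $X=\{A_1+A_2+A_3=0\}$, one finds $\tau_u(x)\to(-A_2(v)/A_3(v))\,v$ as $x\to u$ along direction $v$, which is not identically $u$. Thus for $n\ge 2$ the point $u$ is a genuine indeterminacy of $\tau_u$, and for $n=1$ the extended value is the tangent-line residual, not $u$.

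This is why the paper never evaluates $\tau_u$ at $u$. The hypothesis $\tau_u(x)=\tau_u(x')$ presupposes that $\tau_u$ is defined at $x'$, hence $x'\neq u$ whenever $u\in\cC_u$ (which holds for $n\ge 3$); the equality of lines $\langle x,u\rangle=\langle x',u\rangle$ is then legitimate. The ``$x'=u$'' clause in the statement is just the assertion $\tau_u(x)=u\Leftrightarrow X\cdot\ell=x+2u\Leftrightarrow x\in S_u$, read off from the divisor; it does not require assigning a value to $\tau_u(u)$. So drop the blow-up/extension digression, and you recover precisely the paper's proof.
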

\begin{proof} Notice that by our assumption $x\neq u$
and $\langle x,u\rangle\not\subset X$. 
Suppose that 
$\tau_u(x)=\tau_u(x')$ holds. Then we have 
$\langle x,u\rangle=\langle x',u\rangle$. So
the points $x,x',u$ are alined. If $x'\notin\{x,u\}$, 
then $\tau_u(x)=x'$ and $\tau_u(x')=x$. 
Applying $\tau_u$ once again
we obtain
$x'=\tau_u(x)=\tau_u(x')=x$, a contradiction. 
Thus, $x'\in\{x,u\}$, and so the divisor  cut out 
by $X$ on the line $\langle x,u\rangle$ is
either $2x+u$, or $2u+x$. 
We have $x'=x$ in the former case 
and $x'=u$ in the latter case. 
Now the assertion follows. 
\end{proof}
The following corollary is immediate. 
\begin{corollary}\label{cor:involution} $\,$
\begin{itemize}
\item[(a)]
The morphism 
$\tau_u\colon X\setminus \cC_u\to X$ 
contracts the hyperplane section 
$S_u\setminus \cC_u$ 
to the point $u$ and fixes 
pointwise the variety
$S_u^*\setminus S_u=
S_u^*\setminus \cC_u$
(see Lemma \ref{lem:bitangent}). 
\item[(b)]
The  indeterminacy points
of $\tau_u$ are contained in $\cC_u$. 
\item[(c)]
The affine threefold $X\setminus 
S_u\subset X\setminus \cC_u$ 
is invariant under $\tau_u$ and 
$\tau_u|_{X\setminus S_u}$ is 
a biregular involution with mirror 
$S_u^*\setminus \cC_u$ 
that acts freely on $X\setminus 
(S_u\cup S_u^*)$.
\end{itemize}
\end{corollary}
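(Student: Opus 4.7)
The plan is to read off each of (a), (b), (c) directly from the geometry of the trisecant line $\langle x,u\rangle$, using Lemmas \ref{lem:bitangent} and \ref{lem:fixed-pts}; no new input is needed and there is no genuinely hard step, which is why the paper calls the corollary immediate.

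For (a), I would unpack the two tangency conditions. If $x\in S_u\setminus \cC_u$, then $x\in \T_uX$ and $u\in\T_uX$, so the whole line $\langle x,u\rangle$ lies in $\T_uX$; hence it is tangent to $X$ at $u$ and the divisor cut out on it by $X$ is $2u+x$. Thus $\tau_u(x)=u$, giving the claimed contraction. Symmetrically, if $x\in S_u^*\setminus \cC_u$, then $u\in \T_xX$, the line is tangent at $x$, the divisor is $2x+u$, and $\tau_u(x)=x$. The identity $S_u^*\setminus S_u=S_u^*\setminus\cC_u$ is precisely Lemma \ref{lem:bitangent}. For (b), by construction $\tau_u$ is undefined exactly at $x=u$ and at points $x\ne u$ for which $\langle x,u\rangle\subset X$; both possibilities put $x$ in $\cC_u$.

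For (c), invariance of $X\setminus S_u$ under $\tau_u$ is essentially the contrapositive of the argument in (a): if $y:=\tau_u(x)$ lay in $S_u$ with $y\ne u$, then $\langle x,u\rangle=\langle y,u\rangle\subset\T_uX$ would force $x\in S_u$, while if $y=u$ the divisor becomes $x+2u$ and again $x\in S_u$. The involution property $\tau_u\circ\tau_u=\id$ on $X\setminus S_u$ is immediate from the symmetry of the roles of $x$ and $y$ in the trisecant divisor $x+u+y$, and the biregularity then follows since both $\tau_u$ and its inverse are morphisms on $X\setminus S_u$. By part (a) the locus $S_u^*\setminus\cC_u$ is pointwise fixed; conversely, Lemma \ref{lem:fixed-pts} (or a direct reading of the divisor condition $\tau_u(x)=x$, which forces the line to be tangent at $x$) shows that any fixed point of $\tau_u$ outside $\cC_u$ must lie in $S_u^*$. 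Hence the fixed locus of $\tau_u|_{X\setminus S_u}$ is exactly $S_u^*\setminus\cC_u$, and $\tau_u$ acts freely on the complement $X\setminus(S_u\cup S_u^*)$. The only bookkeeping point to note is that Lemma \ref{lem:bitangent} already guarantees the mirror $S_u^*\setminus\cC_u$ is disjoint from $S_u$, so the description of the mirror inside $X\setminus S_u$ is consistent.
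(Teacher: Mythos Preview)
Your proof is correct and follows exactly the approach the paper intends: the corollary is declared ``immediate'' from Lemmas~\ref{lem:bitangent} and~\ref{lem:fixed-pts} together with the paragraph preceding Lemma~\ref{lem:fixed-pts}, and you have simply unpacked this by reading off the multiplicities in the divisor cut out by $X$ on $\langle x,u\rangle$. There is nothing to add or compare.
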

In the proof of Theorem \ref{mthm} 
we use the following 
construction of a rank 1 spray on a 
smooth cubic hypersurface.

\medskip

\begin{proposition} \label{lem:1-spray}
Let $X\subset\PP^{n+1}$, $n\ge 3$ 
be a smooth cubic hypersurface, $y\in X$ 
be an arbitrary point and 
$x\in X$ be a general point. 
Let $u\in X\cap\langle x,y\rangle$ be a point
different from $x$ and $y$. Choose a line
$l\subset \cC_x$ on $X$ through $x$,
and let $C^*=\tau_u(l\setminus S_u)\cong\A^1$. 
Then there exists 
a rank $1$ spray $(E,p,s)$ on $X$ such that the 
$s$-orbit $O_y$ of $y$
coincides with $C^*$ and $ds(T_0E_y)$ 
is the tangent 
line to $O_y=C^*$  at $y$.
\end{proposition}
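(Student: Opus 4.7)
The plan is to construct the required spray first on a suitable open neighborhood $U$ of $y$ in $X$, and then appeal to Gromov's extension lemma (Proposition \ref{lem:ext}) to extend it to a spray defined on all of $X$; since the extension will agree with the original spray on some smaller open $V \ni y$, both the orbit $O_y$ and the tangent datum at $y$ are automatically preserved.

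For the local construction I take $E = U \times \A^1$ with the trivial line bundle structure and zero section $U \times \{0\}$, fix an isomorphism $\psi \colon \A^1 \simto l \setminus \{p_l\}$ sending $0 \mapsto x$, where $p_l := l \cap \T_u X$ is a single point (using $l \not\subset \T_u X$, which we may assume by general position of $x$), and set
\[
s(z, t) = \tau_{u_z}\bigl(\psi(t)\bigr),
\]
with $u_z \in X \cap \langle x, z\rangle$ the third intersection point, so that $u_y = u$ and $z \mapsto u_z$ is regular near $y$ (using $\langle x, y\rangle \not\subset X$ for $x$ general). The identity $s(z, 0) = \tau_{u_z}(x) = z$ holds because $\langle x, u_z\rangle = \langle x, z\rangle$ cuts $X$ in the divisor $x + z + u_z$, so $U \times \{0\}$ really is the zero section, and at $z = y$ the orbit condition is automatic: $s(\{y\} \times \A^1) = \tau_u(l \setminus \{p_l\}) = \tau_u(l \setminus S_u) = C^*$.

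The principal technical point is showing that $s$ is a \emph{regular} morphism on the whole product $U \times \A^1$, not merely a rational map. By Corollary \ref{cor:involution}(b) the indeterminacy locus of $\tau_{u_z}$ lies in $\cC_{u_z}$, so it suffices to ensure $\psi(\A^1) \cap \cC_{u_z} = \emptyset$ for every $z \in U$. For $z = y$, Lemma \ref{lem:bitangent} gives $\cC_u \cap l \subset S_u \cap l = \{p_l\}$, and to exclude $p_l$ as well I factor the restriction $f|_{\langle u, l\rangle} = L \cdot Q$ of the defining cubic $f$, where $L$ cuts out $l$ inside the plane $\langle u, l\rangle$, and observe that a line on $X$ through $u$ meeting $l$ must lie in $\langle u, l\rangle$, hence must be a line component of $Q$ through $u$. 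Since $u \in X$ forces $Q(u) = 0$, this amounts to $Q$ being reducible; for generic $u$ the conic $Q$ is smooth, so no such component exists and $\cC_u \cap l = \emptyset$. As $x \mapsto u_x = x \star y$ is the birational involution $\tau_y$ and therefore dominant, this open condition on $u$ is provided by general position of $x$, and being open in $z$ it persists on a neighborhood $U$ of $y$.

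Granted this, the tangent condition at $y$ is immediate: $\psi$ is \'etale at $0$ by construction, and $\tau_u$ is \'etale at $x$ because $x \notin S_u$ for $x$ general, so Corollary \ref{cor:involution}(c) implies $\tau_u$ is a biregular involution on $X \setminus S_u$. Hence $s(y, \cdot) = \tau_u \circ \psi$ is \'etale at $0$ and sends $T_0 E_y$ onto the tangent line to $C^*$ at $y$. Finally, Proposition \ref{lem:ext} extends $(E, p, s)$ to a spray $(E', p', s')$ on $X$ that coincides with $(E, p, s)$ on some open $V \ni y$, which preserves the fiber $E_y$ and the restriction $s|_{E_y}$, so the orbit $O_y = C^*$ and the tangent line $ds(T_0 E_y)$ are inherited by the extended spray.
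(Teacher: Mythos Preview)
Your overall strategy—build a local rank-$1$ spray on a neighborhood of $y$ and then invoke Proposition~\ref{lem:ext}—matches the paper's. The gap is in your regularity argument.

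You fix once and for all the parameterization $\psi\colon\A^1\to l\setminus\{p_l\}$ with $p_l=l\cap\T_uX$, and then try to show $\tau_{u_z}\circ\psi$ is regular by proving $\cC_{u}\cap l=\emptyset$ and propagating by openness. To obtain this at $z=y$ you claim the residual conic $Q$ in the plane $\langle u,l\rangle$ is smooth ``for generic $u$''. But $\langle u,l\rangle=\langle l,y\rangle$: since $u\in\langle x,y\rangle$ and $x\in l$, the plane is determined by $l$ and the fixed point $y$ alone, so $Q$ does not depend on $u$ at all, and your appeal to genericity of $u$ (via $x$) is empty. In fact $Q$ is reducible precisely when some line on $X$ through $y$ meets $l$, i.e.\ when $l\cap\cC_y\neq\emptyset$. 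If $y$ is an Eckardt point of a cubic threefold then $\cC_y=S_y$ is the full tangent hyperplane section, so \emph{every} line $l\subset X$ meets $\cC_y$ and $Q$ is reducible for every admissible choice of $x$ and $l$. In that situation $\cC_u\cap l=\{p_l\}$; your map is still defined at $z=y$ because the unique bad point coincides with the one you removed, but for nearby $z$ the point $l\cap S_{u_z}$ moves off $p_l$ while $Q_z$ remains reducible, so $\psi(\A^1)\cap\cC_{u_z}\neq\emptyset$ and $s$ is genuinely indeterminate there. Hence no neighborhood $U$ of such a $y$ will work.

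The paper sidesteps this by letting the parameterization of $l$ vary with $u'$: it straightens the moving section $u'\mapsto z(u')=l\cap\T_{u'}X$ of the trivial $\PP^1$-bundle $V_u\times l\to V_u$ to a constant section at infinity, obtaining a trivialization $F\colon V_u\times\A^1\to (V_u\times l)\setminus Z'$ with $F(u',t)\in l\setminus\{z(u')\}$. Then $F(u',t)\in l\setminus S_{u'}\subset X\setminus\cC_{u'}$ automatically (using only $\cC_{u'}\subset S_{u'}$), so $\tau_{u'}(F(u',t))$ is everywhere regular and no analysis of the residual conic is needed. Replacing your fixed $\psi$ by such a moving trivialization repairs the argument.
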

\begin{proof}
Since $X$ cuts out on 
the line $\langle x,y\rangle$ the reduced divisor 
$x+u+y$ we have 
\[x,y\in U_u:=X\setminus 
(S_u\cup S_u^*)\quad\text{and}\quad 
u,y\in U_x:=X\setminus  (S_x\cup S_x^*),\]
see Lemma \ref{lem:fixed-pts}. In particular, 
$x\notin \T_uX$. Hence the line $l\subset X$ 
passing through $x$ meets $\T_uX$
in a single point, say $z\in S_u=\T_uX\cap X$. 

Fix an isomorphism 
$f\colon \PP^1\stackrel{\cong}\longrightarrow l$
that sends $0$ to $x$ and $\infty$ to $z$. 
Then $f$ embeds
$\A^1=\PP^1\setminus\{\infty\}$ onto 
$l\setminus \{z\}\subset  X\setminus S_u$. 
Since $\tau_u|_{X\setminus S_u}
\in\Aut(X\setminus S_u)$, see 
Corollary \ref{cor:involution}(c),
the map 
$\phi_u:=\tau_u\circ f|_{\A^1}\colon \A^1\to C^*$
is an isomorphism.

By Corollary \ref{cor:involution}(c) $\tau_x|_{U_x}$ is a 
biregular involution acting freely on $U_x$ 
and interchanging $u$ and $y$. 
Since the projective line $\langle x,u\rangle$ 
is not tangent to $X$,
we can choose an open neighborhood 
$V_u\subset U_x$ of $u$ such that also the line 
$\langle x,u'\rangle$ 
is not tangent to $X$ for each $u'\in V_u$. 
Letting $y'=\tau_{u'}(x)\in \langle x,u'\rangle\cap X$ 
we have $\tau_x(u')=y'$.
Letting  $V_y=\tau_x(V_u)\subset U_x$
 the restriction $\tau_x|_{V_u}\colon 
V_u\stackrel{\cong}
\longrightarrow V_y$ is biregular, and so
$V_y$ is a neighborhood of $y=\tau_u(x)$ in $X$. 

Shrinking $V_u$ if necessary we may assume that  
for any $u'\in V_u$ the tangent hyperplane 
$\T_{u'}X$ does not pass through $x$, so that
$z(u'):=l\cap\T_{u'}X\neq x$. 

Consider the trivial $\PP^1$-bundle 
$\pi\colon V_u\times l\to V_u$ 
over $V_u$ along with the constant section 
$Z_0=V_u\times \{x\}$ of $\pi$.  
The subset $\{z(u')|u'\in V_u\}\subset V_u\times l$ 
defines a section, say $Z'$ of
$\pi$ disjoint with $Z_0$. 
The restriction $\pi\colon (V_u\times l)\setminus Z'\to V_u$
is a smooth $\A^1$-fibration with irreducible fibers. 

There is an automorphism of the $\PP^1$-bundle 
$\pi\colon V_u\times l\to V_u$ 
identical on the base $V_u$
that preserves $Z_0$ and sends $Z'$ to 
a constant section, say, 
$Z_\infty$ disjoint with $Z_0$. 
This yields a trivialization 
\[F\colon V_u\times\A^1\stackrel{\cong|_{V_u}}{\longrightarrow}
(V_u\times l)\setminus Z'\] 
that extends $f$ and sends the zero section of 
$V_u\times\A^1\to V_u$  to $Z_0$.

Consider the morphism
\[\phi\colon V_u\times \A^1\to X,
\quad (u',t)\mapsto\phi_{u'}(t):=\tau_{u'}(F(t)).\]
Let 
\[s=\phi\circ (\tau_x\times\id_{\A^1})\colon 
V_y\times\A^1\to X,
\,\,\, (y',t)\mapsto \phi_{u'}(t)\,\,
\text{where}
\,\,u'=\tau_x(y')\in V_u.\]
Consider also the trivial line bundle 
$p\colon E=V_y\times\A^1\to V_y$, 
where $p$ is the first projection, 
with zero section $Z=V_y\times\{0\}$. 
We have $s(y',0)=y'$, that is $s|_{Z}=p|_{Z}$. Thus, 
the triplet $(E,p,s)$ 
is a spray of rank 1 on $V_y$ 
with values in $X$. 
The $s$-orbit $O_{y}$ of $y$
coincides with $\phi_{u}(\A^1)=C^*$ and
the map 
$ds|_{T_0E_{y}}\colon T_0E_{y}\to T_{y}O_{y}$
is onto. 
Due to Proposition \ref{lem:ext} $(E,p,s)$ 
can be extended to a spray on $X$.
\end{proof}
\begin{remark} {\rm We claim that 
the general $s$-orbits $O_{y'}$ in $X$
are smooth affine conics whose closures
are fibers of the conic bundle $\hat X\to\PP^2$ 
resulting 
from the blowup of $\hat X\to X$ with center $l$. 
Indeed, let $y'\in V_y$ be a general point.
Then $\langle x,y'\rangle$ being a general 
projective line 
in $\PP^4$ through $y'$,
$X$ cuts the plane
$L':=\langle l, y'\rangle$ along the line $l$ 
passing through $x$
and the residual smooth conic $C_{y'}$ 
passing through $u'$ and $y'$. 
So, the plane $L'$ is 
$\tau_{u'}$-invariant and $\tau_{u'}$
interchanges $l$ and $C_{y'}$ fixing
their intersection points that are
points
of $l\cap S_{u'}^*$, see 
Corollary \ref{cor:involution}(c). 
Furthermore, $C_{y'}$
is the closure of  the $s$-orbit
$O_{y'}=\tau_{u'}(l\setminus\{z(u')\})$. 
}
\end{remark}
We are now ready to prove  Theorem \ref{mthm}.
\begin{proof}[Proof of Theorem \ref{mthm}]
Since every complete smooth rational surface 
is elliptic, 
see \cite[Theorem 1.1]{KZ2},
we may suppose that $n\ge3$.  
Due to Corollary \ref{rem:Gr}  
it suffices to show that $X$ is locally elliptic. 
Fixing an arbitrary $y\in X$,
choose a general point $x\in X$ 
and $n$ projective lines $l_1,\ldots,l_n$ on $X$ 
through $x$ such that
their tangent lines $T_xl_i$ span $T_xX$, 
see Lemma \ref{lem:n-lines}. 
Applying Proposition \ref{lem:1-spray} 
to each of the lines $l_i$
yields rank 1 sprays $(E_i,p_i,s_i)$, 
$i=1,\ldots,n$, on $X$ 
dominating at $y$ along their respective 
orbits $O_{i,y}$.
We claim that the collection of sprays 
$\{(E_i,p_i,s_i)\}_{i=1,\ldots,n}$ is dominating 
at $y$. The latter implies that $X$ is subelliptic, 
and so elliptic, 
see Corollary \ref{rem:Gr}  (cf. also 
Lemma \ref{cor:crit-ell-2}). 
The above domination is equivalent to the fact
that the tangent lines $T_yO_{i,y}$ span 
the tangent space $T_yX$. By the construction 
of Proposition \ref{lem:1-spray} 
we have 
\[T_yO_{i,y}=d\phi_{i,u}(T_xl_i)\,\,\,\text{where}
\,\,\,\phi_i=\tau_u\circ f_i\,\,\,\text{with}
\,\,\,
f_i\colon (\A^1,0)\stackrel{\cong}
\longrightarrow (l_i\setminus \{z_i\},x).\]
Therefore, 
\[{\rm span}(T_yO_{1,y},\ldots,\,T_yO_{n,y})
=d\tau_u({\rm span}(T_xl_1,\ldots,\,T_xl_n))=
d\tau_u(T_xX)=T_yX,\]
which proves our claim. The domination at $y$ 
spreads to a 
neighborhood of $y$ in $X$, which gives 
the local ellipticity (and the subellipticity), 
hence also the ellipticity. 
\end{proof}
\section{An alternative proof}
We suggest here an alternative proof of 
Theorem \ref{mthm}. The
geometrical arguments
used in Section \ref{sec:proof}
are now replaced by references to
Forstneri\v{c}-Kusakabe's theorem.
\begin{lemma}\label{p1} Let $X$ be a variety
and $S\subset X\times\PP^n$ be 
a subvariety of codimension at least $2$. 
Then for a general point $(x,a)\in X\times\PP^n$ 
and a general projective line 
$L\subset\PP^n$ through $a$
there exists a neighborhood $U_x$ of $x$ in $X$
such that $(U_x\times L)\cap S=\emptyset$. 
\end{lemma}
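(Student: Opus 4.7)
The plan is to combine a fiber-dimension analysis of the first projection $\pi_1\colon X\times\PP^n\to X$ restricted to $S$ with the properness of the projection $X\times L\to X$ induced by the completeness of $L$.

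First I would treat separately the case where $\pi_1|_S\colon S\to X$ is not dominant: then for a general $x\in X$ one has $x\notin\overline{\pi_1(S)}$, and any line $L$ together with any neighborhood $U_x\subset X\setminus\overline{\pi_1(S)}$ of $x$ works. So I may assume $\pi_1|_S$ is dominant. The codimension hypothesis $\dim S\le\dim X+n-2$ combined with the theorem on fiber dimension then yields that a general fiber $S_x:=S\cap(\{x\}\times\PP^n)$ has dimension at most $n-2$ in $\PP^n$.

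Next I would pin down the line $L$. With $x$ chosen in a suitable dense open subset of $X$ and $a\in\PP^n$ general, one has $a\notin S_x$ (since $\dim S_x\le n-2<n$). Projecting $S_x$ from $a$ to the projective space $\PP^{n-1}$ of lines through $a$ produces a subvariety of dimension at most $n-2$; consequently a general projective line $L\subset\PP^n$ through $a$ satisfies $L\cap S_x=\emptyset$.

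Finally, I would invoke properness. Since $L\cong\PP^1$ is complete, the second projection $p\colon X\times L\to X$ is a closed map, so $p(S\cap(X\times L))$ is closed in $X$. The fiber of $p|_{S\cap(X\times L)}$ over $x$ equals $S_x\cap L=\emptyset$, hence $x\notin p(S\cap(X\times L))$; thus $U_x:=X\setminus p(S\cap(X\times L))$ is an open neighborhood of $x$ with $(U_x\times L)\cap S=\emptyset$. I do not foresee a serious obstacle; the only point requiring care is that the successive genericity conditions (on $x$, then on $a$ depending on $x$, then on $L$ depending on $(x,a)$) each remove only proper closed subvarieties of the relevant parameter space, so together they define a nonempty Zariski-open subset of the incidence variety parameterizing triples $(x,a,L)$ with $a\in L\subset\PP^n$.
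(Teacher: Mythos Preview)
Your argument is correct and is essentially the same dimension count as the paper's, just organized directly rather than by contradiction: the paper fixes a general $a$, assumes the general line $L_v$ through $a$ meets $S$ over the general $x$, and deduces $\dim S\ge\dim X+n-1$, whereas you slice first over $x$ to get $\dim S_x\le n-2$ and then project from $a$. Your explicit use of properness of $X\times L\to X$ to upgrade $(\{x\}\times L)\cap S=\emptyset$ to a full neighborhood $U_x$ is exactly the step the paper compresses into the phrase ``it suffices to show that $(\{x\}\times L)\cap S=\emptyset$''.
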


\begin{proof}
 It suffices to show that 
$(\{x\}\times L)\cap S=\emptyset$ 
for a general choice of 
$(x,a)\in X\times\PP^n$
 and $L$ in the Grassmannian ${\bf G}_a(1,n)$ 
 of lines in $\PP^n$ 
 passing through $a$. 
 Since $(x,a)\in X\times\PP^n$ is general 
 we have $(x,a)\notin S$.
Let $L_v$ be a line passing  through $a$ 
in direction of a general vector
$v\in\PP T_a\PP^n\simeq\PP^{n-1}$. 
Fixing $a$ assume to the contrary that 
$(\{x\}\times L_v)\cap S\neq\emptyset$ 
for general 
$x\in X$ and $v\in \PP^{n-1}$.
Then $\dim(S)\ge \dim(X)+n-1
=\dim(X\times\PP^n)-1$. 
This contradicts the assumption that 
${\rm codim}_{X\times\PP^n}(S)\ge 2$. 
\end{proof}
\begin{corollary}\label{c1} 
Let $X$ be a normal algebraic variety,
$\psi \colon X\times \A^n\dashrightarrow X$ 
be a dominant rational map,
and $S$ be the set of indeterminacy points 
of $\psi$. 
Then for a general point $(u,a)\in X\times\A^n$ and 
a general affine line $l\subset \A^n$ passing through $a$
there is a neighborhood $U_u$ of $u$ in $X$
such that $(U_u\times l)\cap S=\emptyset$, and so
$\psi|_{U_u\times l}\colon U_u\times l\to X$ 
is a morphism. 
\end{corollary}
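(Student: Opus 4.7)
The plan is to deduce Corollary~\ref{c1} directly from Lemma~\ref{p1} by taking closures. I would embed $\A^n\hookrightarrow\PP^n$ as the standard affine chart with complementary hyperplane $H_\infty$ at infinity, and let $\bar S\subset X\times\PP^n$ denote the Zariski closure of $S$. Granted (verified below) that $\bar S$ has codimension at least two, Lemma~\ref{p1} applied to $\bar S$ supplies a general point $(u,a)\in X\times\PP^n$, a general projective line $L\subset\PP^n$ through $a$, and a neighborhood $U_u$ of $u$ in $X$ with $(U_u\times L)\cap\bar S=\emptyset$. For generic choices one has $a\in\A^n$ and $L\not\subset H_\infty$, so $l:=L\cap\A^n$ is an affine line through $a$. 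The inclusions $U_u\times l\subset U_u\times L$ and $S\subset\bar S$ then give $(U_u\times l)\cap S=\emptyset$; since $\psi$ is a morphism on the complement of $S$ in $X\times\A^n$, its restriction to $U_u\times l$ is a morphism into $X$.

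The main step is to check the codimension claim for $\bar S$. For this, I would fix a projective completion $\bar X\supset X$ and extend $\psi$, via closure of the graph, to a rational map $\bar\psi\colon X\times\PP^n\dashrightarrow\bar X$. Since $X\times\PP^n$ is normal (as $X$ is normal) and $\bar X$ is proper, the classical codimension theorem for rational maps from normal varieties to proper targets implies that the indeterminacy locus of $\bar\psi$ has codimension at least two in $X\times\PP^n$. The closure $\bar S$ is contained in this indeterminacy locus, hence likewise has codimension at least two.

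The delicate point, and the only real obstacle to a clean application, is the inclusion of $\bar S$ in the indeterminacy locus of $\bar\psi$: because the target $X$ is only quasiprojective, a priori $S$ could exceed the indeterminacy locus of $\bar\psi$ by a divisorial component consisting of points where $\bar\psi$ is regular but takes values in the boundary $\bar X\setminus X$. In the intended geometric setting $\psi$ does not contract any divisor of $X\times\A^n$ into $\bar X\setminus X$, so this extra divisorial component is absent, the inclusion holds, and the argument above goes through.
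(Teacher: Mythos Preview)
Your approach is essentially the paper's one-line argument: extend $\psi$ to a rational map $\bar\psi$ on $X\times\PP^n$ and invoke Lemma~\ref{p1} for its indeterminacy set. The paper keeps the target equal to $X$ (not a compactification $\bar X$), so that the replacement for $S$ is simply $I(\bar\psi\colon X\times\PP^n\dashrightarrow X)$; since $S=I(\bar\psi)\cap(X\times\A^n)$, the conclusion for $\psi$ follows immediately from Lemma~\ref{p1}.

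Your detour through a projective completion $\bar X$ of the target is what manufactures the ``delicate point''. It is unnecessary here: in the paper's only use of the corollary (Proposition~\ref{p2}) the variety $X$ is a smooth projective cubic hypersurface, hence already proper, so $X=\bar X$ and the obstruction you describe is vacuous. With $X$ proper, $X\times\PP^n$ normal and target proper give $\operatorname{codim}\,I(\bar\psi)\ge 2$ directly, and there is nothing further to check.

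That said, your caution is well placed at the level of the corollary \emph{as stated}. For a merely normal (quasi-projective) $X$ the indeterminacy locus $S$ can genuinely be divisorial, and then the conclusion fails: e.g.\ $X=\A^1$, $n=2$, $\psi(u,a_1,a_2)=1/a_1$, where $S=X\times\{a_1=0\}$ meets every $U_u\times l$ for generic $l$. So the corollary tacitly requires $X$ proper (or at least $\operatorname{codim}\,S\ge 2$), which the paper's terse proof leaves implicit. Your appeal to the ``intended geometric setting'' is the right instinct; the cleanest formulation is simply to record that in the application $X$ is projective, after which your first two paragraphs (or the paper's single sentence) constitute a complete proof.
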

\begin{proof} It suffices to extend $\psi$ to a rational map 
$\bar\psi\colon X\times\PP^n\dasharrow X$, to replace $S$ 
by the indeterminacy set 
of $\bar\psi$ and to apply Lemma \ref{p1} in this new framework. 
\end{proof}
The next proposition is an analog of 
Proposition \ref{lem:1-spray}.
\begin{proposition}\label{p2} Let $X$ be a smooth 
cubic hypersurface in $\PP^{n+1}$ and $y$ 
be a point in $X$.
Then there is a neighborhood $U_y$ of $y$ 
in $X$ and a rank $1$spray $(E,p,s)$ on $U_y$ with values in $X$
such that $E=U_y\times \A^1$, $p\colon U_y\times \A^1\to U_y$
 is the first projection
and a morphism $s\colon U_y\times \A^1\to X$ 
verifies the following conditions:
\begin{itemize}
\item $s|_Z=p|_Z$  where $Z=U_y\times\{0\}$ is the 
zero section of $p\colon E\to U_y$;
\item the orbit map 
$s|_{E_y}\colon E_y\to s(E_y)$ is smooth 
at the origin $0_y\in E_y$;
\item $ds|_{T_{0_y}E_y}$ sends a tangent 
vector to $E_y$ at $0_y$ 
to a general vector in $T_yX$.
\end{itemize}
\end{proposition}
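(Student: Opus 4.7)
The plan is to replay the construction of Proposition \ref{lem:1-spray} --- where the $s$-orbit through $y$ has the form $\tau_{u_0}(C)$ for a rational curve $C$ through a generic point $x_0\in X$ collinear with $y$ and $u_0$ --- but now extract the needed regularity from Corollary \ref{c1} instead of from the direct inspection of $\cC_u$, $S_u$, $S_u^*$ used in Section \ref{sec:proof}. Since every smooth cubic hypersurface of dimension $n\ge 2$ is unirational, fix a dominant rational map $\rho\colon \A^n\dashrightarrow X$; in characteristic zero and with matching dimensions, $\rho$ is étale at a generic point. Apply Corollary \ref{c1} to the dominant rational map
\[
\psi\colon X\times \A^n\dashrightarrow X,\qquad\psi(u,a):=\tau_u(\rho(a));
\]
this produces, for a general $(u_0,a_0)\in X\times \A^n$ and a general affine line $l\subset \A^n$ through $a_0$, a neighborhood $U_{u_0}$ of $u_0$ in $X$ on which $\psi|_{U_{u_0}\times l}$ is a morphism.

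Given the prescribed point $y\in X$, constrain the choice by $\tau_{u_0}(\rho(a_0))=y$, equivalently $u_0=\tau_y(\rho(a_0))$. As $a_0$ varies in $\A^n$ the corresponding pairs $(u_0,a_0)$ trace out an $n$-dimensional graph $\Sigma_y\subset X\times \A^n$ whose projection to the first factor is dominant (since $\tau_y$ is birational and $\rho$ is dominant), so $\Sigma_y$ meets the open generic locus of Corollary \ref{c1}; pick $(u_0,a_0)\in \Sigma_y$ satisfying all the requirements. By a further open dense restriction, $x_0:=\rho(a_0)$ is generic enough that $y\notin\cC_{x_0}$, whence $\tau_{x_0}$ is regular in a neighborhood of $y$. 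Parameterize $l$ by $f\colon \A^1\xrightarrow{\sim} l$ with $f(0)=a_0$, and choose a neighborhood $U_y$ of $y$ small enough that $\tau_{x_0}$ is regular on $U_y$ and $\tau_{x_0}(U_y)\subset U_{u_0}$. Define
\[
s\colon U_y\times \A^1\to X,\qquad s(y',t):=\psi\bigl(\tau_{x_0}(y'),f(t)\bigr)=\tau_{\tau_{x_0}(y')}\bigl(\rho(f(t))\bigr).
\]
The involutive identity $\tau_{\tau_{x_0}(y')}(x_0)=y'$ yields $s(y',0)=y'$, so with $E:=U_y\times\A^1$ and $p:=\pr_1$ the triple $(E,p,s)$ is a rank-$1$ spray on $U_y$ with values in $X$.

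For the tangent condition, the chain rule at $(y,0)$ gives
\[
ds(\p/\p t)|_{(y,0)}=d\tau_{u_0}|_{x_0}\circ d\rho|_{a_0}\bigl(f'(0)\bigr),
\]
where $f'(0)\in T_{a_0}\A^n$ is the direction of $l$ at $a_0$ --- a general vector there because $l$ is a general line through $a_0$. Since $\rho$ is étale at $a_0$ and $\tau_{u_0}$ is biregular near $x_0$ (both by the genericity of the choices), the two differentials are isomorphisms, so $ds(\p/\p t)|_{(y,0)}$ is a general vector in $T_yX$. Its nonvanishing, together with smoothness of the rational curve $\tau_{u_0}(\rho(l))$ at $y$, shows that the orbit map $s|_{E_y}\colon E_y\to s(E_y)$ is smooth at $0_y$. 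The main obstacle in carrying this out rigorously is the compatibility step in the second paragraph: for every $y\in X$ (not just generic $y$) one must verify that $\Sigma_y$ meets the open generic locus of Corollary \ref{c1}, which I expect to follow from the dominance of the correspondence $a_0\mapsto u_0=\tau_y(\rho(a_0))$ together with a dimension estimate ruling out containment of $\Sigma_y$ in the bad locus.
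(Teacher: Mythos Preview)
Your approach mirrors the paper's: form $\psi(u,a)=\tau_u(\phi(a))$ for a parametrizing map $\phi$ into $X$, invoke Corollary~\ref{c1} to get regularity on $U_{u_0}\times l$ for a suitable affine line $l$, then transport to a neighborhood of $y$ via $\tau_{x_0}$ using the identity $\tau_{\tau_{x_0}(y')}(x_0)=y'$. The one substantive difference is the choice of $\phi$: you take a dominant rational $\rho\colon\A^n\dashrightarrow X$ coming from unirationality, whereas the paper takes a surjective \emph{morphism} $f\colon\A^{n+1}\to X$ supplied by Kusakabe's theorem. The paper's choice removes any indeterminacy contributed by $\phi$ and guarantees nonempty fibers over every point of $X$, which streamlines the verification that $(u,a)\notin S$; your choice is more elementary, resting only on the classical unirationality of cubics rather than on a recent theorem whose own hypothesis is subellipticity.

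The gap you flag is real but closes easily, and the paper's invocation of Corollary~\ref{c1} glosses over the same point (its pair $(u,a)$ is not literally general in $X\times\A^{n+1}$ either). You do not need $(u_0,a_0)$ general in $X\times\A^n$; it suffices that (i) $u_0$ is general in $X$ and (ii) $(u_0,a_0)\notin S$. Condition (i) follows from your observation that the first projection $\Sigma_y\to X$ is dominant; condition (ii) is automatic because $\psi(u_0,a_0)=y$ is defined. Now pass to the closure $\bar S\subset X\times\PP^n$: for general $u_0$ the slice $\bar S_{u_0}\subset\PP^n$ has codimension $\ge 2$, so a general projective line $L$ through $a_0$ misses it, and since $L$ is compact and $\bar S$ is closed there is a neighborhood $U_{u_0}$ with $(U_{u_0}\times L)\cap\bar S=\emptyset$. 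This is exactly Lemma~\ref{p1} applied to the slice over $u_0$, and it fills the gap.
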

\begin{proof}
By Kusakabe's theorem, see \cite{Kus22}, 
there is a surjective morphism $f: \A^{n+1} \to X$. 
Let $g=(\id_X, f)\colon X\times \A^{n+1}\to X\times X$.
Define a rational map 
$\tau \colon X\times X\dashrightarrow X$ 
by letting 
$\tau(u,x)=\tau_u(x)$.
Then the composition 
\[\psi\colon X\times 
\A^{n+1}\stackrel{g}{\longrightarrow} 
X\times X\stackrel{\tau}{\dashrightarrow} X,
\qquad (u,a)\mapsto \tau_u(f(a)),\] 
is a dominant rational map.
Letting
$I(\tau)\subset X\times X$
and $S:=I(\psi)\subset X\times \A^{n+1}$ 
be the indeterminacy sets of $\tau$ and $\psi$,
 respectively,
we have $S\subset g^{-1}(I(\tau))$ and
${\rm codim}_{X\times \A^{n+1}}(S)\ge 2$.

Let now $L_y$ be a general line in $\PP^{n+1}$ 
through $y$. 
It meets $X$ at general points $x$ and $u$
different from $y$.
Recall that the indeterminacy set $I(\tau_u)\subset X$ 
is contained in the union $\cC_u$ of lines on $X$ 
passing through $u$, see 
the paragraph preceding Lemma \ref{lem:fixed-pts}. 
Since $L_y\not\subset X$, we have
 $x\notin I(\tau_u)$, and so $(u,x)\notin I(\tau)$.
Since $S\subset g^{-1}(I(\tau))$, for $a \in f^{-1} (x)$ 
we have $(u,a)\notin S$ and $\psi(u,a)=y$.
 
Fix a general affine line $l\subset\A^{n+1}$ 
through $a$. Let $U_u$ be a neighborhood  of $u$ in $X$
such that $\psi$ is regular on 
$U_u\times l\simeq U_u\times \A^1$, 
see Corollary \ref{c1}. 
Then
$U_y :=\tau (U_u\times \{x\})$ is a neighborhood 
of $y$ in $X$ and  
$\tau_x\colon U_u\stackrel{\simeq}{\longrightarrow} U_y$
is an isomorphism. Composing $\psi$ with $(\tau_x, \id)$ 
one gets a morphism $s\colon U_y\times\A^1 \to X$. 
This defines a desired rank 1 spray $(E,p,s)$ on $U_y$ 
with values in $X$. Indeed, $s(x, a)=y$ 
for any $a\in f^{-1}(x)$.
Furthermore, $s|_{E_y}=s|_{(x,l)}=\tau_u(f(l))$ 
is the $s$-orbit of $y$.
Since $x\in X$ is a general point, it is not 
a critical value of $f$. 
Hence for $a\in f^{-1}(x)$, the morphism
$g=(\id_X,f)\colon X\times\A^{n+1}\to X\times X$ 
is dominant at $(u,a)$.
Since $l\subset\A^{n+1}$ is a general line 
passing through $a$, 
it follows that
the differential $ds$ sends $T_{0_y}E_y$ to 
a general vector line in $T_yX$. 
\end{proof}
Due to Gromov's Localization Lemma (see Corollary \ref{rem:Gr})
to prove Theorem \ref{mthm} it remains to apply 
the following corollary.
\begin{corollary}
$X$ is locally elliptic.
\end{corollary}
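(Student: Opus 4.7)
The plan is to establish local ellipticity at an arbitrary point $y\in X$ by producing $n$ local rank $1$ sprays whose orbit tangent directions at $y$ span $T_yX$, and then invoking Lemma \ref{cor:crit-ell-2}. The essential input is the last bullet of Proposition \ref{p2}, which guarantees that the constructed rank $1$ spray has tangent direction $ds(T_{0_y}E_y)$ equal to an a priori arbitrary general vector line in $T_yX$.

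First, I would fix $y\in X$ and apply Proposition \ref{p2} successively $n$ times. Since each application is free to produce any general direction in $T_yX$, after the $i$-th step I can insist that the resulting tangent vector $v_i\in T_yX$ avoids $\mathrm{span}(v_1,\dots,v_{i-1})$; this uses only the fact that the locus of $n$-tuples of linearly dependent vectors in $(T_yX)^n$ is a proper closed subvariety. A generic sequence of choices therefore yields a basis $v_1,\dots,v_n$ of $T_yX$, realized as orbit tangent directions of rank $1$ sprays $(E_i,p_i,s_i)$ defined on neighborhoods $U_i\subset X$ of $y$, with values in $X$.

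Next, I would combine these rank $1$ sprays into an object dominating at $y$. The most direct route is to apply Lemma \ref{cor:crit-ell-2} verbatim: we have $n$ local rank $1$ sprays with values in $X$, whose orbit parameterizations at $y$ are \'etale at $0_y$ (by the smoothness bullet of Proposition \ref{p2}) with tangent vectors spanning $T_yX$. The lemma then delivers ellipticity, and a fortiori local ellipticity at $y$. An alternative is to extend each $(E_i,p_i,s_i)$ to all of $X$ via Proposition \ref{lem:ext} and compose the extensions, which is legitimate because the constituent sprays have rank $1$ (a step that would fail in higher rank, as noted in the remark after Corollary \ref{rem:Gr}); the composition is then a rank $n$ spray on $X$ dominating at $y$ by the spanning condition. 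Either way, since $y\in X$ was arbitrary, $X$ is locally elliptic.

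The main subtlety that warrants careful checking is verifying that the phrase \emph{a general vector line in $T_yX$} from Proposition \ref{p2} truly yields a Zariski-dense family of attainable directions, so that the successive avoidance condition above is non-vacuous. Inspecting the construction, this reduces to the observation that varying the auxiliary data --- the general projective line $L_y$ through $y$ meeting $X$ at $x$ and $u$, and the general affine line $l\subset\A^{n+1}$ through a preimage $a\in f^{-1}(x)$ of Kusakabe's surjection $f\colon\A^{n+1}\to X$ --- causes the direction $ds|_{T_{0_y}E_y}$ to sweep out a Zariski-dense subset of $\PP T_yX$, as is already implicit in the final paragraph of the proof of Proposition \ref{p2}.
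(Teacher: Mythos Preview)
Your proposal is correct and follows essentially the same approach as the paper: apply Proposition~\ref{p2} $n$ times to produce rank $1$ sprays at $y$ whose orbit tangent lines span $T_yX$, then conclude via the extension/composition machinery (the paper cites Proposition~\ref{lem:ext} and Proposition~\ref{prop:KZ}, while you cite the packaged form Lemma~\ref{cor:crit-ell-2}, which amounts to the same thing). Your extra care in justifying that ``general direction'' permits an inductive choice of linearly independent $v_i$ is a welcome elaboration of a step the paper leaves implicit.
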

\begin{proof}
Choose independent general vectors $v_1,\ldots,v_n$ in $T_yX$
and the corresponding local rank 1 sprays $(E_i,p_i,s_i)$ defined on 
a common neighborhood $U$ of $y$ in $X$ with values in $X$, 
see Proposition \ref{p2}. Extend these sprays 
to sprays on the whole $X$, see Proposition \ref{lem:ext}.
The resulting collection of sprays is dominating on $U$.
This guarantees the subellipticity of $X$ on $U$. 
Now the claim follows by Proposition \ref{prop:KZ}. 
\end{proof}

\vskip 0.1in

{\bf Acknowledgments.} It is our pleasure to thank  
Yuri Prokhorov for valuable discussions, references and 
comments on a preliminary version 
of this paper. 

%

\maketitle
\end{document}